\title{Algebraic $K$-theory Spectra and Factorisations of Analytic Assembly Maps}
\author{Paul D.~Mitchener \\
Georg-August Universit{\"a}t, G{\"o}ttingen \\
e-mail: mitch@uni-math.gwdg.de}
\newenvironment{proof}{\par \noindent{\bf Proof: }}{\hspace{\stretch{1}} $\Box$ \par \mbox{}}
\newcommand{\noproof}{\hspace{\stretch{1}} $\Box$}
\newtheorem{theorem}{Theorem}[section]
\newtheorem{proposition}[theorem]{Proposition}
\newtheorem{lemma}[theorem]{Lemma}
\newtheorem{corollary}[theorem]{Corollary}
{\theorembodyfont{\rmfamily}
\newtheorem{definition}[theorem]{Definition}

}
\newenvironment{theorem*}{\par \medskip \noindent{\bf Theorem }}{\par \mbox{}}
\newenvironment{lemma*}{\par \medskip \noindent{\bf Theorem }}{\par \mbox{}}
\newcommand{\Hom}{\mathop{Hom}}
\newcommand{\Ob}{\mathop{Ob}}
\newcommand{\Sk}{\mathop{Sk}}
\newcommand{\im}{\mathop{im}}
\newcommand{\Or}{\mathop{Or}}
\newcommand{\map}{\mathop{map}}
\begin{document}

\maketitle

\section*{Abstract}

In this article we use existing machinery to define connective $K$-theory spectra associated to topological ringoids.  Algebraic $K$-theory of discrete ringoids, and the analytic $K$-theory of Banach categories are obtained as special cases.

As an application, we show how the analytic assembly maps featuring in the Novikov and Baum-Connes conjectures can be factorised into composites of assembly maps resembling those appearing in algebraic $K$-theory and maps coming from completions of certain topological ringoids into Banach categories.  These factorisations are proved by using existing characterisations of assembly maps along with our unified picture of algebraic and analytic $K$-theory.

\tableofcontents

\section{Ringoids and Additive Categories}

Recall (see for example \cite{Mi} that a category $\mathcal R$ is called a {\em ringoid} if every morphism set $\Hom (a,b)_{\mathcal R}$ is an abelian group, and composition of morphisms
$$\Hom (b,c)_{\mathcal R}\times \Hom (a,b)_{\mathcal R} \rightarrow \Hom (a,c)_{\mathcal R}$$
is bilinear with respect to group addition.  If $\mathcal R$ is a topological category, where the set of objects carries the discrete topology and the group addition is continuous, we call the category $\mathcal R$ a {\em topological ringoid}.

A functor $F\colon {\mathcal R}\rightarrow {\mathcal S}$ between topological ringoids is termed a {\em homomorphism} of topological ringoids if it is continuous, and a group homomorphism on each morphism set.

Let $\mathcal R$ be a ringoid, and consider objects $a,b\in \Ob ({\mathcal R})$.  Then an object $a\oplus b$ is called a {\em biproduct} of the objects $a$ and $b$ if it comes equipped with morphisms $i_a \colon a\rightarrow a\oplus b$, $i_b \colon b\rightarrow a\oplus b$, $p_a \colon A\oplus B\rightarrow A$, and $p_B \colon a\oplus b \rightarrow b$ satisfying the equations
$$p_a i_a = 1_a \qquad p_b i_b = 1_b \qquad i_a p_a + i_b p_b = 1_{a\oplus b}$$

Observe that a biproduct is simultaneously a product and a coproduct.  A ringoid $\mathcal R$ is called an {\em additive category} if it has a zero object (that is, and object that is simultaneously initial and terminal), and every pair of objects has a biproduct.  A topological ringoid that is an additive category is termed an {\em additive topological category}.

If $\mathcal R$ and $\mathcal S$ are additive topological categories, a topological ringoid homomorphism $F\colon {\mathcal R}\rightarrow {\mathcal S}$ is called an {\em additive functor} if the object $F(a\oplus b)$ is a biproduct of the objects $F(a)$ and $F(b)$ whenever $a,b\in \Ob ({\mathcal R})$.

\begin{definition}
Let ${\mathcal R}$ be a topological ringoid.  Then we define the {\em additive completion}, ${\mathcal R}_\oplus$, to be the category in which the objects are formal sequences of the form
$$a_1 \oplus \cdots \oplus a_n \qquad a_i \in \Ob ({\mathcal R})$$
Repetitions are allowed in such formal sequences.  The empty sequence is also allowed, and labelled $0$.

The morphism set $\Hom (a_1 \oplus \cdots \oplus a_m , b_1 \oplus \cdots \oplus b_n )$ is defined to be the set of matrices of the form
$$\left( \begin{array}{ccc}
x_{1,1} & \cdots & x_{1,m} \\
\vdots & \ddots & \vdots \\
x_{n,1} & \cdots & x_{n,m} \\
\end{array} \right)  \qquad x_{i,j} \in \Hom (a_j , b_i )$$
and composition of morphisms is defined by matrix multiplication.

The set of objects in the category ${\mathcal R}_\oplus$ is given the discrete topology.  The morphism set $\Hom (a_1 \oplus \cdots \oplus a_m , b_1 \oplus \cdots \oplus b_n )$ is topologised as a subspace of the space ${\mathcal R}^{mn}$.
\end{definition}

Given a topological ringoid homomorphism $F\colon {\mathcal R} \rightarrow {\mathcal S}$, there is an induced additive functor $F_\oplus \colon {\mathcal R}_\oplus \rightarrow {\mathcal S}_\oplus$ defined by writing
$$F_\oplus (a_1 \oplus \cdots a_n ) = F(a_1) \oplus \cdots \oplus F(a_n) \qquad a_i \in \Ob ({\mathcal R})$$
and
$$F_\oplus \left( \begin{array}{ccc}
x_{1,1} & \cdots & x_{1,m} \\
\vdots & \ddots & \vdots \\
x_{n,1} & \cdots & x_{n,m} \\
\end{array} \right) =
\left( \begin{array}{ccc}
F(x_{1,1}) & \cdots & F(x_{1,m}) \\
\vdots & \ddots & \vdots \\
F(x_{n,1}) & \cdots & F(x_{n,m}) \\
\end{array} \right)  \qquad x_{i,j} \in \Hom (a_j , b_i )$$

The following result is easy to check.

\begin{proposition}
The assignment ${\mathcal R} \mapsto {\mathcal R}_\oplus$ defines a functor from the category of topological ringoids and homomorphisms to the category of additive topological categories and additive functors.
\noproof
\end{proposition}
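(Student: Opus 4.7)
The plan is to unpack all three pieces of data that the word \emph{functor} packages here and verify each in turn, relying throughout on the fact that ``everything happens entrywise'' in the matrix calculus.

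First I would check that $\mathcal{R}_\oplus$ really is an additive topological category. The abelian group structure on $\Hom (a_1\oplus \cdots \oplus a_m, b_1\oplus \cdots\oplus b_n)$ is entrywise matrix addition, which is well defined because each $\Hom(a_j,b_i)_{\mathcal{R}}$ is abelian, and it is continuous because addition in $\mathcal{R}$ is. Bilinearity of composition reduces, via the usual expansion of a matrix product, to bilinearity of composition in $\mathcal{R}$, and continuity of this composition reduces to joint continuity of the sum-of-products map $\mathcal{R}^{\cdot}\times\mathcal{R}^{\cdot}\to \mathcal{R}$. The empty sequence $0$ is immediately seen to be a zero object, since its morphism sets to and from any other object consist of the empty matrix. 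Finally, for any two formal sequences $\alpha=a_1\oplus\cdots\oplus a_m$ and $\beta=b_1\oplus\cdots\oplus b_n$, the concatenation $\alpha\oplus\beta = a_1\oplus\cdots\oplus a_m\oplus b_1\oplus\cdots\oplus b_n$ together with the block matrices $\bigl(\begin{smallmatrix}I\\0\end{smallmatrix}\bigr)$, $\bigl(\begin{smallmatrix}0\\I\end{smallmatrix}\bigr)$, $\bigl(\begin{smallmatrix}I&0\end{smallmatrix}\bigr)$, $\bigl(\begin{smallmatrix}0&I\end{smallmatrix}\bigr)$ satisfies the three biproduct identities by direct matrix calculation.

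Next I would verify that the assignment $F\mapsto F_\oplus$ produces an additive functor. Functoriality of $F_\oplus$ on $\mathcal{R}_\oplus$ (preservation of identities and composition) reduces to the fact that $F$ is a functor plus the observation that applying $F$ entrywise commutes with matrix multiplication, since $F$ preserves both composition and addition. Continuity and the group-homomorphism property on each morphism set are inherited entrywise from those of $F$, so $F_\oplus$ is a topological ringoid homomorphism; and the biproduct-preservation property is immediate from the explicit description of biproducts as concatenations, which $F_\oplus$ respects on the nose.

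Finally, functoriality of the assignment $\mathcal{R}\mapsto\mathcal{R}_\oplus$ itself --- i.e.\ $(1_{\mathcal{R}})_\oplus = 1_{\mathcal{R}_\oplus}$ and $(G\circ F)_\oplus = G_\oplus\circ F_\oplus$ --- is again a pointwise/entrywise check from the defining formulas for $F_\oplus$ on objects and morphisms. There is no real obstacle here: the whole proof is a sequence of routine verifications that follow the slogan that matrix-calculus identities over any ringoid reduce to the corresponding bilinear identities for the entries. The only place where a little care is needed is the continuity of composition, where one must note that the topology placed on the matrix morphism sets is precisely the subspace topology from $\mathcal{R}^{mn}$, so that joint continuity of addition and composition in $\mathcal{R}$ transfers to joint continuity of matrix multiplication.
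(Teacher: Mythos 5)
Your verification is correct and is exactly the routine check the paper omits (the proposition is stated with no proof, as ``easy to check''): entrywise arguments for the ringoid and topological structure, the empty sequence as zero object, concatenation with block matrices as biproduct, and entrywise application of $F$ for functoriality. Nothing is missing and nothing diverges from the intended argument.
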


At times, it will be useful to look at ringoids with slightly more structure.

\begin{definition}
Let $R$ be a commutative topological ring equipped with an identity element.  Then we call a topological ringoid $\mathcal M$ an {\em $R$-moduloid} if:

\begin{itemize}

\item Each morphism set $\Hom (a,b)_{\mathcal M}$ is a topological $R$-module.

\item Given an element $r\in R$, and morphisms $x\in \Hom (a,b)_{\mathcal M}$ and $y\in \Hom (b,c)_{\mathcal M}$, we have the equation
$$r(xy) = (rx)y$$

\end{itemize}

\end{definition}

Observe that any ringoid can be considered a $\mathbb Z$-moduloid.  For convenience, all topological rings in this article are assumed to be commutative and unital.

\begin{definition}
Let $R$ be a topological ring.  A {\em non-unital topological moduloid}, $\mathcal M$, consists of a set of objects $\Ob ({\mathcal M})$, and a topological $R$-module, $\Hom (a,b)_{\mathcal M}$, for each pair of objects $a,b\in \Ob ({\mathcal M})$ such that the following axioms are satisfied:

\begin{itemize}

\item There is a continuous associative {\em composition law}
$$\Hom (b,c)_{\mathcal M}\times \Hom (a,b)_{\mathcal M} \rightarrow \Hom (a,c)_{\mathcal M}$$

\item Given an element $r\in R$, and morphisms $x\in \Hom (a,b)_{\mathcal M}$ and $y\in \Hom (b,c)_{\mathcal M}$, we have the equation

\end{itemize}

\end{definition}

Thus, a non-unital $R$-moduloid is a collection of objects and morphisms similar to an $R$-moduloid, apart from the fact that identity endomorphisms need not exist.

In the rest of this article, when we refer to an $R$-moduloid, it may by non-unital.  We will refer to a {\em unital} $R$-moduloid or {\em non-unital} $R$-moduloid when we wish to be more specific.

There are obvious notions of a homomorphism of $R$-moduloids, and of a sub-moduloid of an $R$-moduloid.  When necessary, we will use these notions without further comment.

\section{$K$-theory of Additive Categories}

It is well-known (see for example \cite{Seg2, Wa1}) how to define the $K$-theory of discrete additive categories.  The same approach can be used when there is a topology involved.

\begin{definition}
Let $\mathcal R$ be an additive topological category.  Then we define $N_n {\mathcal R}$ to be the category in which the objects are $n$-tuples of the form
$$(a_1 , \ldots , a_n ) \qquad a_i \in \Ob ({\mathcal R})$$
together with choices of biproduct $a_{i_1} \oplus \cdots \oplus a_{i_k}$ whenever $\{ i_1 , \ldots ,i_k \} \subseteq \{ 1, \ldots ,n \}$.

The morphisms and topology in the category $N_n {\mathcal R}$ are those arising from the morphisms and topology in the category ${\mathcal R}^n$.
\end{definition}

Let $N_\bullet {\mathcal R}$ denote the sequence of topological categories $(N_n {\mathcal R})$.  Then the sequence $N_\bullet {\mathcal R}$ can be considered a simplicial topological category.  We have face maps $\sigma_i \colon N_n {\mathcal R}\rightarrow N_{n-1}{\mathcal R}$ defined by writing
$$\sigma_i (a_1 , \ldots , a_n ) = (a_1 , \ldots , a_{i-1} , a_i \oplus a_{i+1}, a_{i+2} ,\ldots ,a_n ) \qquad i\neq 0$$
and
$$\sigma_0 (a_1 , \ldots , a_n ) = (a_2 ,\ldots ,a_n ) \qquad i\neq 0$$

We define degeneracy maps $\tau_i \colon N_n {\mathcal R}\rightarrow N_{n+1} {\mathcal R}$ are by writing
$$\tau_i (a_1 ,\ldots , a_n ) = (a_i ,\ldots , a_i , 0 , a_{i+1} ,\ldots ,a_n)$$
Note that the above formulae clearly determine what happens to morphisms and choices of biproducts.  It is easy to see that the category $N_\bullet {\mathcal R}$ is a simplicial additive topological category.  We can therefore iterate the above construction to obtain an $n$-simplicial additive topological category $N_\bullet^{(n)}{\mathcal R} := N_\bullet \cdots N_\bullet {\mathcal R}$.  Let us write $wN_\bullet^{(n)}{\mathcal R}$ to denote the category of isomorphisms in this category.

\begin{definition}
We define the {\em $K$-theory spectrum}, ${\mathbb K}({\mathcal R})$, of the additive topological category $\mathcal R$ to be the spectrum where the space ${\mathbb K}({\mathcal R})_n$ is the geometric realisation $|wN_\bullet^{(n)}{\mathcal R}|$ whenever $n\geq 1$.

There is a map $\Sigma |wN_\bullet^{(n)}{\mathcal R}| \rightarrow |wN_\bullet^{(n+1)}{\mathcal R}|$ defined by inclusion of the $1$-skeleton.  The structure map$${\mathbb K}({\mathcal R})_n \rightarrow \Omega {\mathbb K}({\mathcal R})_{n+1}$$
is defined to be the adjoint of the above inclusion.
\end{definition}

Note that we are using here the `thick' geometric realisation from appendix A of \cite{Seg2}.  When dealing with simplicial spaces rather than simplicial sets.  When dealing with simplicial spaces rather than just simplicial sets, the thick geometric realisation has appropriate formal properties involving homotopies and fibrations.

\begin{proposition}
The $K$-theory spectrum ${\mathcal R}\mapsto {\mathbb K}({\mathcal R})$ defines a functor from the category of additive topological categories to the category of symmetric spectra.
\end{proposition}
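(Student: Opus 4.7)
The plan is to verify two things: that ${\mathbb K}({\mathcal R})$ genuinely carries the structure of a symmetric spectrum (symmetric group actions plus equivariant structure maps), and that the construction is functorial in additive functors.

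First I would deal with functoriality at the level of the underlying construction. Given an additive functor $F\colon {\mathcal R}\rightarrow {\mathcal S}$, one obtains a functor $N_n F\colon N_n{\mathcal R}\rightarrow N_n{\mathcal S}$ by sending an $n$-tuple $(a_1,\ldots,a_n)$ to $(F(a_1),\ldots,F(a_n))$, with biproduct choices transported via the fact that $F$ takes biproducts to biproducts. Compatibility with the face and degeneracy maps is immediate from the defining formulae, so $N_\bullet F$ is a map of simplicial additive topological categories. Iterating gives an $n$-simplicial functor $N_\bullet^{(n)}F$, which restricts to subcategories of isomorphisms and then realises to a continuous map $|wN_\bullet^{(n)}F|\colon |wN_\bullet^{(n)}{\mathcal R}|\rightarrow |wN_\bullet^{(n)}{\mathcal S}|$. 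Since the inclusion of the $1$-skeleton is natural in the simplicial direction being singled out, the resulting family of maps is compatible with the structure maps of the spectra, giving a map ${\mathbb K}(F)\colon {\mathbb K}({\mathcal R})\rightarrow {\mathbb K}({\mathcal S})$. Preservation of identities and composites is formal.

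Next I would construct the symmetric group actions. The $n$-simplicial object $N_\bullet^{(n)}{\mathcal R}$ is defined by iterating the same construction $n$ times, so the symmetric group $\Sigma_n$ acts on it by permuting the $n$ copies of $N_\bullet$, that is, by reordering the simplicial coordinates. This action is by $n$-simplicial functors and preserves the subcategory of isomorphisms, so after applying thick geometric realisation we obtain a continuous $\Sigma_n$-action on ${\mathbb K}({\mathcal R})_n = |wN_\bullet^{(n)}{\mathcal R}|$. Naturality in ${\mathcal R}$ is built in because $|wN_\bullet^{(n)}F|$ respects the permutation of simplicial directions.

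The main obstacle, and the final step, is to verify that the structure maps are $\Sigma_p\times\Sigma_n$-equivariant in the symmetric-spectrum sense: the iterated structure map $\Sigma^p |wN_\bullet^{(n)}{\mathcal R}|\rightarrow |wN_\bullet^{(n+p)}{\mathcal R}|$ must respect the action of $\Sigma_p$ on the suspension factor and that of $\Sigma_n$ on the target, together with the permutation embedding $\Sigma_p\times\Sigma_n\hookrightarrow\Sigma_{n+p}$. This reduces to the observation that the map is induced by the inclusion of the $1$-skeleton in each of the $p$ newly introduced simplicial directions, which is manifestly invariant under permuting those $p$ directions among themselves and independent of the order in which the original $n$ directions are labelled. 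Combined with the formal properties of the thick realisation of simplicial spaces recorded in \cite{Seg2}, this gives the required equivariance and completes the proof that ${\mathcal R}\mapsto {\mathbb K}({\mathcal R})$ takes values in symmetric spectra.
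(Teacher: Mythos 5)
Your proposal is correct and follows essentially the same route as the paper's own proof: the $\Sigma_n$-action comes from permuting the order of the iterated $N_\bullet$ constructions, the structure maps are checked to be $\Sigma_p\times\Sigma_n$-equivariant, and functoriality is a formal verification. You simply spell out more of the details that the paper leaves as ``straightforward to check.''
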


\begin{proof}
There is an action of the symmetric group $\Sigma_n$ on the simplicial category $wN_\bullet^{(n)}{\mathcal R} = wN_\bullet \cdots N_\bullet {\mathcal R}$ defined by permuting the order in which the $N_\bullet$ constructions are made.  There is therefore an induced $\Sigma_n$-action on the geometric realisation $|wN_\bullet^{(n)}{\mathcal R}|$.

The iterated structure map $|wN_\bullet^{(n)}{\mathcal R}|\rightarrow \Omega^k |wN_\bullet^{(n+k)} {\mathcal R}|$ is $\Sigma_k \times \Sigma_n$-equivariant in the obvious sense.  The spectrum ${\mathbb K}({\mathcal R})$ is therefore a {\em symmetric spectrum} as defined in \cite{HSS}.

Functoriality is straightforward to check.
\end{proof}

The paper \cite{Wa1} contains a construction of $K$-theory for discrete categories equipped with certain subcategories, called {\em categories of cofibrations} and {\em categories of weak equivalences}, that satisfy certain axioms.  It is shown in section 1.8 of \cite{Wa1} that this general construction of $K$-theory agrees with the above construction when we are looking at a discrete additive category.

The proof we mention still works for topological categories, although a topological version of Quillen's theorem A from \cite{Q1} is needed.  A suitable generalisation can be found, for example, in section 4 of \cite{Wa3}.

Many other properties of the $K$-theory of additive topological categories (or, more generally, categories with cofibrations and weak equivalences) can be proven exactly as in the discrete case.  In particular, the proof of the additivity theorem in \cite{McC} works without any modification at all, and many formal properties of $K$-theory can be obtained as corollaries of the additivity theorem; see \cite{Sta} for details of the argument.

The following result can be proved in this way, as can the fibration theorem and cofinality theorem we state later on in this section.

\begin{theorem}
Let $\mathcal R$ be a topological ringoid.  Then the structure map
$${\mathbb K}({\mathcal R})_n \rightarrow \Omega {\mathbb K}({\mathcal R})_{n+1}$$
is a weak homotopy equivalence whenever $n\geq 1$.
\noproof
\end{theorem}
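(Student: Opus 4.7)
The plan is to identify ${\mathbb K}({\mathcal R})_{n+1}$ as a delooping of ${\mathbb K}({\mathcal R})_n$ by the standard Segal machine, with the additivity theorem (noted above as valid in the topological setting) supplying the Segal condition and a connectedness argument supplying the grouplike hypothesis when $n\geq 1$.

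First I would present ${\mathbb K}({\mathcal R})_{n+1} = |wN_\bullet^{(n+1)}{\mathcal R}|$ as the thick realisation, in the outermost $N_\bullet$-direction, of the simplicial space
$$Y_\bullet \colon p \mapsto Y_p := |wN_\bullet^{(n)} N_p {\mathcal R}|.$$
Unwinding definitions gives $Y_0 \simeq *$, since $N_0 {\mathcal R}$ has only the empty tuple and its identity, and $Y_1 = |wN_\bullet^{(n)}{\mathcal R}| = {\mathbb K}({\mathcal R})_n$. By construction of the spectrum, the structure map in question is the adjoint of the $1$-skeleton inclusion $\Sigma Y_1 \to |Y_\bullet|$.

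Second I would check that $Y_\bullet$ is \emph{special} in Segal's sense, i.e.\ that the map $Y_p \to Y_1^p$ induced by the $p$ coordinate projections is a weak equivalence for every $p\geq 0$. At the categorical level, the forgetful functor $N_p {\mathcal R}\to {\mathcal R}^p$ is a weak equivalence of additive topological categories: an inverse is supplied by any choice of biproducts, and any two such choices are canonically isomorphic. Applying $|wN_\bullet^{(n)}(-)|$ and using that $K$-theory of a product is, up to weak equivalence, the product of $K$-theories, an instance of the topological additivity theorem applied iteratively through the $n$ inner $N_\bullet$ directions, then yields the required Segal condition.

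Finally I would invoke the Segal--Waldhausen delooping principle: for a special simplicial space $Y_\bullet$ with $Y_0$ contractible and $Y_1$ grouplike, the adjoint of the $1$-skeleton inclusion is a weak equivalence $Y_1 \to \Omega |Y_\bullet|$. For $n\geq 1$ the space $Y_1 = |wN_\bullet^{(n)}{\mathcal R}|$ is itself the thick realisation of a simplicial space whose $0$-th term is a point, hence $Y_1$ is connected and $\pi_0 Y_1 = 0$ is trivially a group; this is exactly why the theorem excludes $n=0$. The main obstacle is certifying that the classical Segal delooping argument and the identification $N_p {\mathcal R}\simeq {\mathcal R}^p$ both survive the passage to topological morphism spaces and thick realisations; these points are handled by the topological additivity theorem referred to above and the formal properties of thick realisation from appendix~A of \cite{Seg2}, so no genuinely new ideas beyond the Waldhausen--Segal framework are required.
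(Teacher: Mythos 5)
Your proof is correct and is essentially the argument the paper intends: the paper gives no proof of this theorem, deferring to the additivity-theorem machinery of \cite{McC} and \cite{Sta}, and your presentation of ${\mathbb K}({\mathcal R})_{n+1}$ as the thick realisation of the special simplicial space $Y_p = |wN_\bullet^{(n)} N_p {\mathcal R}|$, with connectivity of $Y_1$ for $n \geq 1$ supplying the grouplike hypothesis in Segal's delooping criterion, is the standard way that machinery is deployed. The only point worth flagging is that for the $N_\bullet$ construction the Segal condition barely needs the additivity theorem: the forgetful functor $N_p {\mathcal R} \rightarrow {\mathcal R}^p$ is already an equivalence of topological categories, and $|wN_\bullet^{(n)}(-)|$ takes such equivalences to homotopy equivalences and commutes with finite products up to weak equivalence by appendix A of \cite{Seg2}; the full strength of additivity is what one needs for the corresponding statement about Waldhausen's $S_\bullet$ construction, where $S_p{\mathcal C}$ is only $K$-theoretically, not categorically, equivalent to ${\mathcal C}^p$.
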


Whenever $n\geq 1$, we can define {\em $K$-theory groups}
$$K_n ({\mathcal R}):= \pi_n {\mathbb K}({\mathcal R})$$

It follows from the above result that these groups can be defined as ordinary homotopy groups
$$K_n ({\mathcal R}) = \pi_n \Omega |wN_\bullet {\mathcal R}_\oplus |$$
rather than as the stable homotopy groups of some spectrum.

\begin{definition}
Let $f\colon {\mathcal A}\rightarrow {\mathcal B}$ be a continuous additive functor.  Let $PN_\bullet {\mathcal B}$ be the simplicial category defined by writing $(PN_\bullet {\mathcal B})_n = N_{n+1}{\mathcal B}$.  Then we define the simplicial additive topological category $N_\bullet (f\colon {\mathcal A}\rightarrow {\mathcal B})$ by the pullback diagram
$$\begin{array}{ccc}
N_\bullet (f\colon {\mathcal A}\rightarrow {\mathcal B}) & \rightarrow & PN_\bullet {\mathcal B} \\
\downarrow & & \downarrow \\
N_\bullet {\mathcal A} & \rightarrow & N_\bullet {\mathcal B} \\
\end{array}$$
\end{definition}

The simplicial category $PN_\bullet {\mathcal B}$ is clearly additive.  It is straightforward to show that the pullback $N_\bullet (f\colon {\mathcal A}\rightarrow {\mathcal B})$ is also additive.  We can therefore define spaces $|wN_\bullet^{(n)} (f\colon {\mathcal A}\rightarrow {\mathcal B})$ and a $K$-theory spectrum
$${\mathbb K}(f\colon {\mathcal A}\rightarrow {\mathcal B})$$

For obvious reasons, we will call the following result the {\em fibration theorem}.

\begin{theorem} \label{fibration}
The canonical sequence
$${\mathbb K}(f\colon {\mathcal A}\rightarrow {\mathcal B}) \rightarrow {\mathbb K}({\mathcal A})\rightarrow {\mathbb K}({\mathcal B})$$
is a fibration up to homotopy.
\noproof
\end{theorem}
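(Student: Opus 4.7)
The plan is to adapt Waldhausen's classical proof of the fibration theorem from \cite{Wa1} to the topological additive setting, following the axiomatic approach of Staffeldt \cite{Sta} which derives the fibration theorem formally from the additivity theorem already invoked above. The target is to identify ${\mathbb K}(f\colon {\mathcal A}\rightarrow {\mathcal B})$ with the homotopy fibre of ${\mathbb K}({\mathcal A})\rightarrow {\mathbb K}({\mathcal B})$.

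The first step is to check that ${\mathbb K}(PN_\bullet {\mathcal B})$ is a contractible spectrum. The simplicial additive topological category $PN_\bullet {\mathcal B}$ carries an extra degeneracy, namely the $0$-th face map $d_0 \colon N_{n+1}{\mathcal B} \rightarrow N_n {\mathcal B}$ of the original simplicial object $N_\bullet {\mathcal B}$, which is forgotten in the path construction. This extra degeneracy furnishes a simplicial contraction to a point, and iterating $wN_\bullet$ followed by thick geometric realisation preserves such contractions by the formal properties recorded in the appendix to \cite{Seg2}.

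The second step is to show that applying ${\mathbb K}$ level-wise to the defining pullback square and then realising in the remaining simplicial direction yields a homotopy pullback diagram of symmetric spectra
$$\begin{array}{ccc}
{\mathbb K}(f\colon {\mathcal A}\rightarrow {\mathcal B}) & \rightarrow & {\mathbb K}(PN_\bullet {\mathcal B}) \\
\downarrow & & \downarrow \\
{\mathbb K}({\mathcal A}) & \rightarrow & {\mathbb K}({\mathcal B}) \\
\end{array}$$
Combined with the contractibility of the upper right corner, this identifies ${\mathbb K}(f\colon {\mathcal A}\rightarrow {\mathcal B})$ with the homotopy fibre of ${\mathbb K}({\mathcal A})\rightarrow {\mathbb K}({\mathcal B})$, giving the theorem.

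The main obstacle is precisely this second step: upgrading a level-wise pullback of simplicial additive categories to a homotopy pullback of spectra after $K$-theory. The essential input is the additivity theorem, which, applied in the $N_\bullet$ direction, implies that each face map $N_{n+1}{\mathcal B} \rightarrow N_n {\mathcal B}$ splits on $K$-theory in the appropriate principal-fibration sense, so that ${\mathbb K}(PN_\bullet {\mathcal B})\rightarrow {\mathbb K}(N_\bullet {\mathcal B})$ is a quasi-fibration. The behaviour of thick realisation with respect to level-wise fibrations then promotes the level-wise pullback to the desired homotopy pullback. Once this key point is in hand, the remaining steps are the formal manipulations already used in the discrete case and spelled out in \cite{Sta}.
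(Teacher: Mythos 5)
Your proposal follows exactly the route the paper intends: the paper gives no proof of Theorem \ref{fibration}, stating only that it is a corollary of the additivity theorem with the discrete argument of Waldhausen as organised by Staffeldt carrying over verbatim to the topological setting, and your two steps (simplicial contractibility of $PN_\bullet {\mathcal B}$ onto the trivial category $N_0 {\mathcal B}$, then additivity plus the realisation lemma for the thick realisation to promote the level-wise pullback to a homotopy pullback of spectra) are precisely that argument. The only nitpick is terminological: the structure furnishing the contraction of $PN_\bullet {\mathcal B}$ is the forgotten $0$-th \emph{degeneracy} (together with the forgotten $0$-th face giving the retraction to $N_0 {\mathcal B}$), not the $0$-th face alone, but this does not affect the substance.
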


Let ${\mathcal A}$ be an additive subcategory of some additive category $\mathcal B$.  Then we say that the category $\mathcal A$ is {\em strictly cofinal} in $\mathcal B$ if for each object $b\in \Ob ({\mathcal B})$, there is an object $a\in \Ob ({\mathcal A})$ such that the biproduct $b\oplus a$ is isomorphic to some object in the category ${\mathcal A}$.

The following result is known as the {\em cofinality theorem}, and is also a corollary of the additivity theorem.

\begin{theorem} \label{cofinality}
Let $\mathcal B$ be an additive topological category.  The $\mathcal A$ be a strictly cofinal subcategory of $\mathcal B$.  Then the inclusion ${\mathcal A}\hookrightarrow {\mathcal B}$ induces a homotopy equivalence of $K$-theory spectra.
\noproof
\end{theorem}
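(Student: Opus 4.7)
My plan is to apply the fibration theorem and reduce the claim to a statement about the relative spectrum of the inclusion. By Theorem~\ref{fibration} applied to $i \colon \mathcal{A} \hookrightarrow \mathcal{B}$, there is a homotopy fibration
$$\mathbb{K}(i\colon \mathcal{A} \to \mathcal{B}) \to \mathbb{K}(\mathcal{A}) \to \mathbb{K}(\mathcal{B}),$$
so it is enough to show that the map on the right induces an isomorphism of $\pi_n$ for every $n \geq 0$.

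For $\pi_0$ the strict cofinality hypothesis gives the argument directly. Any $b \in \Ob(\mathcal{B})$ admits $a \in \Ob(\mathcal{A})$ with $b \oplus a \cong a'$ for some $a' \in \Ob(\mathcal{A})$, so $[b] = [a'] - [a]$ lies in the image of $K_0(\mathcal{A}) \to K_0(\mathcal{B})$, giving surjectivity. If $a_1, a_2 \in \Ob(\mathcal{A})$ satisfy $[a_1] = [a_2]$ in $K_0(\mathcal{B})$, then $a_1 \oplus b \cong a_2 \oplus b$ for some $b \in \Ob(\mathcal{B})$; choosing $b'$ with $b \oplus b' \cong c \in \Ob(\mathcal{A})$ and adding $b'$ to both sides transports the relation into $\mathcal{A}$, giving injectivity.

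For $n \geq 1$ I would run Staffeldt's reduction of cofinality to additivity, as carried out in \cite{Sta}. The additivity theorem implies that for any $a \in \Ob(\mathcal{A})$ the endofunctor ``biproduct with $a$'' of $\mathcal{B}$ induces on $\mathbb{K}(\mathcal{B})$ the identity modulo the constant map at $[a]$, and similarly on $\mathbb{K}(\mathcal{A})$. Given an $n$-simplex of $wN_\bullet^{(k)} \mathcal{B}$, strict cofinality lets us stabilise each vertex by a suitable object of $\mathcal{A}$ so that the simplex becomes equivalent to one in $wN_\bullet^{(k)} \mathcal{A}$, while additivity ensures that this stabilisation does not alter the class represented in $\mathbb{K}$. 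An entirely parallel relative argument for simplices of $N_\bullet(i\colon \mathcal{A} \to \mathcal{B})$ then forces $\pi_n \mathbb{K}(i)$ to vanish for $n \geq 1$.

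The main obstacle is the coherent simultaneous stabilisation: a single simplex in $N_\bullet^{(k)} \mathcal{B}$ involves many objects together with chosen biproducts and morphisms, and to move the entire simplex into $\mathcal{A}$ one must pick auxiliary objects of $\mathcal{A}$ at every vertex in a way that is compatible with all faces, degeneracies and the iterated $N_\bullet$-structure. Working this out is the substantive combinatorial step; once it is done in the discrete case as in \cite{Sta}, the topological version of Quillen's Theorem~A from \cite{Wa3} assembles the pointwise equivalences into the required weak equivalence of thick geometric realisations, and hence of the $K$-theory spectra.
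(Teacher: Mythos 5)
Your proposal follows essentially the same route as the paper, which states this result without proof as a corollary of the additivity theorem, citing Staffeldt \cite{Sta} for the discrete argument and the topological version of Quillen's Theorem A from \cite{Wa3} for the upgrade to topological categories. Your additional observations (the reduction via theorem \ref{fibration} and the direct $K_0$ computation) are consistent with that strategy, and the coherent-stabilisation step you flag as the substantive combinatorial work is precisely what the paper delegates to \cite{Sta}.
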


\section{$K$-theory of Ringoids and Moduloids}  \label{Kring}

\begin{definition}
Let $\mathcal R$ be a topological ringoid.  Then we define the $K$-theory of $\mathcal R$ by the formula
$${\mathbb K}({\mathcal R}):={\mathbb K}({\mathcal R}_\oplus )$$
\end{definition}

If the ringoid $\mathcal R$ is already additive, then it is clearly cofinal in the category ${\mathcal R}_\oplus$.  Therefore, by theorem \ref{cofinality} the above definition is consistent, at least up to homotopy, with the old definition made for additive topological categories.

Given a topological ring $R$, a unital $R$-moduloid $\mathcal M$ is a topological ringoid, so we can define the $K$-theory ${\mathbb K}({\mathcal M})$ by the above procedure.  We need to do slightly more in the non-unital case, however.

\begin{definition}
Let $\mathcal M$ be a non-unital moduloid over a topological ring $R$.  Then we define the {\em unitisation} ${\mathcal M}^+$ to be the category with the same objects as the moduloid $\mathcal M$, and with morphism sets
$$\Hom (a,b)_{{\mathcal M}^+} = \left\{ \begin{array}{ll}
\Hom (a,b)_{\mathcal M} & a\neq b \\
\Hom (a,a) \oplus R & a=b \\
\end{array} \right.$$

The topology on the morphism sets is either the original topology or the product topology, depending on which of the above cases we are considering.  Composition of morphisms
$$\Hom (b,c)_{{\mathcal M}^+} \times \Hom (a,b)_{{\mathcal M}^+} \rightarrow \Hom (a,c)_{{\mathcal M}^+}$$
is defined by the formula
$$(x+\lambda)(y+\mu) = xy +\ lambda x + \mu y + \lambda \mu$$
where $x\in \Hom (b,c)_{\mathcal M}$, $y\in \Hom (a,b)_{\mathcal M}$, and
$$\lambda \in \left\{ \begin{array}{ll}
R & b=c \\
\{ 0 \} b\neq c \\
\end{array} \right. \qquad \mu \in \left\{ \begin{array}{ll}
R & a=b \\
\{ 0 \} a\neq b \\
\end{array} \right.$$
\end{definition}

It is easy to check that the unitisation ${\mathcal M}^+$ is a unital $R$-moduloid.

Let us write $R_{\mathcal M}$ to denote the unital $R$-moduloid with the same set of objects as the moduloid $\mathcal M$, and morphism sets
$$\Hom (a,b)_{R_{\mathcal M}} = \left\{ \begin{array}{ll}
R & a=b \\
\{ 0 \} & a\neq b \\
\end{array} \right.$$
equipped with the obvious multiplication and topology.  There is a natural homomorphism $\pi \colon {\mathcal M}^+ \rightarrow R_{\mathcal M}$ defined to be the identity map on the set of objects, and by the formula
$$\pi (x + \lambda ) = \lambda \qquad x\in \Hom (a,b)_{\mathcal M}, \lambda \in R$$
on each set of objects.

\begin{definition}
We define the $K$-theory spectrum  of a non-unital moduloid $\mathcal M$ be the formula
$${\mathbb K}({\mathcal M}) := {\mathbb K}(\pi \colon {\mathcal M}^+ \rightarrow R_{\mathcal M})$$
\end{definition}

According to theorem \ref{fibration}, the spectrum ${\mathbb K}({\mathcal M})$ is be the homotopy fibre of the map
$$\pi_\star \colon {\mathbb K}({\mathcal M}^+ ) \rightarrow {\mathbb K}(R_{\mathcal M})$$

Before we go any further with our explorations, we need to prove that the above definition is consistent with the old definition when the $R$-moduloid $\mathcal M$ is unital.

\begin{proposition}
Let $\mathcal M$ be a unital $R$-moduloid.  Let $\pi' \colon {\mathcal M}\oplus R_{\mathcal M}\rightarrow R_{\mathcal M}$ be the obvious quotient map.  Then there is a natural isomorphism $\alpha \colon {\mathcal M}\oplus R_{\mathcal M} \rightarrow {\mathcal M}^+$ fitting into a commutative diagra
m$$\begin{array}{ccc}
{\mathcal M}\oplus R_{\mathcal M} & \stackrel{\pi'}{\rightarrow} & R_{\mathcal M} \\
\downarrow & &  \| \\
{\mathcal M}^+ & \stackrel{\pi}{\rightarrow} & R_{\mathcal M} \\
\end{array}$$
\end{proposition}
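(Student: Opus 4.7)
The plan is to exploit the fact that, as topological $R$-modules, the morphism sets of $\mathcal{M} \oplus R_{\mathcal{M}}$ and of $\mathcal{M}^+$ are already canonically identified: both are $\Hom(a,b)_{\mathcal{M}}$ when $a \neq b$ (the $R_{\mathcal{M}}$-contribution vanishes), and both are $\Hom(a,a)_{\mathcal{M}} \oplus R$ when $a = b$. What differs is only the composition law: in the biproduct $\mathcal{M} \oplus R_{\mathcal{M}}$ composition is componentwise, $(x,\lambda)(y,\mu) = (xy, \lambda\mu)$, while in $\mathcal{M}^+$ the formal composition $(x+\lambda)(y+\mu) = xy + \lambda y + \mu x + \lambda\mu$ mixes the components.

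I would define $\alpha$ to be the identity on objects, the identity on $\Hom(a,b)$ for $a \neq b$, and
\[
\alpha(x, \lambda) = (x - \lambda \cdot 1_a) + \lambda \qquad \text{on } \Hom(a,a)_{\mathcal{M}} \oplus R,
\]
where we crucially use that $\mathcal{M}$ is unital in order to form $\lambda \cdot 1_a \in \Hom(a,a)_{\mathcal{M}}$. The candidate inverse $(y + \mu) \mapsto (y + \mu \cdot 1_a, \mu)$ is manifestly continuous and $R$-linear, so once $\alpha$ is shown to be a moduloid homomorphism its bijectivity is immediate.

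The main step is verifying that $\alpha$ preserves composition. Using the identities $x(\mu \cdot 1_a) = \mu x$, $(\lambda \cdot 1_a)y = \lambda y$, and $(\lambda \cdot 1_a)(\mu \cdot 1_a) = \lambda \mu \cdot 1_a$ available in a unital $R$-moduloid, expanding $\alpha(x,\lambda)\cdot \alpha(y,\mu)$ in $\mathcal{M}^+$ causes all the mixed scalar-action terms to cancel, leaving $(xy - \lambda\mu \cdot 1_a) + \lambda\mu = \alpha(xy, \lambda\mu)$. The remaining verifications are then formal: the identity $(1_a, 1)$ maps to $0 + 1$, which is the unit at $a$ in $\mathcal{M}^+$; the square commutes because $\pi \circ \alpha$ preserves the scalar component and thus equals $\pi'$; and naturality in $\mathcal{M}$ holds because any unital homomorphism $f \colon \mathcal{M} \to \mathcal{N}$ satisfies $f(1_a) = 1_{f(a)}$, so the defining formula intertwines $f^+$ on one side with $f \oplus R_f$ on the other. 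The one obstacle worth naming is the composition check itself, whose term-by-term cancellation is essentially the categorified version of the classical ring isomorphism between the unitisation of a unital $R$-algebra $A$ and the product $A \times R$.
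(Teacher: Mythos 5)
Your proposal is correct and follows essentially the same route as the paper: the paper defines $\alpha(x,\lambda) = x - \lambda e_a + \lambda$ (writing $e_a$ for your $1_a$) with inverse $(y+\mu)\mapsto (y+\mu e_a,\mu)$, exactly your formulas. You simply carry out in detail the composition, naturality, and diagram checks that the paper leaves to the reader.
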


\begin{proof}
Let us write $e_a \in \Hom (a,a)_{\mathcal M}$ to denote an identity morphism in the moduloid $\mathcal M$.  We can define a natural isomorphism $\alpha \colon {\mathcal M}\oplus R_{\mathcal M} \rightarrow {\mathcal M}^+$ by writing $\alpha (a) =a$ for each object $a$, and
$$\alpha (x ,\lambda ) = x - \lambda e_a + \lambda$$
whenever $(x,\lambda ) \in \Hom (a,b)_{{\mathcal M}\oplus R_{\mathcal M}}$.

The inverse of the homomorphism $\alpha$ is defined by the formula
$$\alpha^{-1} (y + \mu ) = (y + \mu e_a ,\mu )$$
whenever $y+ \mu \in \Hom (a,b)_{{\mathcal M}^+}$.
\end{proof}

\begin{corollary}
Let $\mathcal M$ be a unital moduloid.  Then the $K$-theory spectra ${\mathbb K}({\mathcal M})$ and ${\mathbb K}(\pi \colon {\mathcal M}^+ \rightarrow R_{\mathcal M})$ are naturally homotopy-equivalent.
\end{corollary}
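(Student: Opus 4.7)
The plan is to reduce, via the preceding proposition, to analysing the $K$-theory of the split extension $\pi': \mathcal{M}\oplus R_\mathcal{M} \to R_\mathcal{M}$, and then to identify its homotopy fibre with $\mathbb{K}(\mathcal{M})$ using the fibration theorem and the obvious splitting of $\pi'$.

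The proposition just proved supplies a natural isomorphism $\alpha: \mathcal{M}\oplus R_\mathcal{M} \xrightarrow{\cong} \mathcal{M}^+$ of unital $R$-moduloids, fitting into a commutative square whose right-hand vertical map is the identity of $R_\mathcal{M}$. Viewing this square as a morphism in the arrow category and applying functoriality of the construction $f \mapsto \mathbb{K}(f)$, a five-lemma comparison of the two fibrations provided by Theorem \ref{fibration} yields a natural weak equivalence
$$\mathbb{K}(\pi': \mathcal{M}\oplus R_\mathcal{M} \to R_\mathcal{M}) \xrightarrow{\simeq} \mathbb{K}(\pi: \mathcal{M}^+ \to R_\mathcal{M}).$$
It therefore suffices to identify $\mathbb{K}(\pi')$ with $\mathbb{K}(\mathcal{M})$.

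For this, apply Theorem \ref{fibration} directly to $\pi'$. The inclusion $\iota: R_\mathcal{M} \hookrightarrow \mathcal{M}\oplus R_\mathcal{M}$ sending $\lambda \mapsto (0,\lambda)$ is a section of $\pi'$, so the resulting homotopy fibration
$$\mathbb{K}(\pi') \to \mathbb{K}(\mathcal{M}\oplus R_\mathcal{M}) \to \mathbb{K}(R_\mathcal{M})$$
splits up to homotopy, exhibiting $\mathbb{K}(\pi')$ as a retract of $\mathbb{K}(\mathcal{M}\oplus R_\mathcal{M})$. On the other hand, $\mathcal{M}\oplus R_\mathcal{M}$ is literally a direct sum of $R$-moduloids: writing $i_\mathcal{M} p_\mathcal{M} + \iota p_R = \mathrm{id}$ for the evident projections and inclusions and invoking the additivity theorem cited in the previous section yields a decomposition $\mathbb{K}(\mathcal{M}\oplus R_\mathcal{M}) \simeq \mathbb{K}(\mathcal{M}) \times \mathbb{K}(R_\mathcal{M})$ compatible with $\mathbb{K}(\iota)$ and $\pi'_*$. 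Comparing the two splittings of $\mathbb{K}(\mathcal{M}\oplus R_\mathcal{M})$ identifies $\mathbb{K}(\pi')$ naturally with $\mathbb{K}(\mathcal{M})$, as required.

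The main obstacle is the application of additivity in the third step: one has to be careful that the idempotent decomposition of the morphism spaces of $\mathcal{M}\oplus R_\mathcal{M}$, whose building blocks $i_\mathcal{M}$ and $\iota$ are homomorphisms only in the non-unital sense, passes through the additive completion $(-)_\oplus$ to give the expected product splitting on $K$-theory. Once this decomposition is in hand, every other step is formal manipulation of the fibration sequence produced by Theorem \ref{fibration}.
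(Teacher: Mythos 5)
Your proposal is correct and takes essentially the same route as the paper: the paper's proof compares the homotopy fibration $\mathbb{K}(\mathcal{M}) \rightarrow \mathbb{K}(\mathcal{M}\oplus R_{\mathcal{M}}) \rightarrow \mathbb{K}(R_{\mathcal{M}})$ with the defining fibration of $\mathbb{K}(\pi\colon \mathcal{M}^+ \rightarrow R_{\mathcal{M}})$ via the isomorphism $\alpha$ (identity on the base) and reads off the equivalence of fibres. Your argument only differs in making explicit, via the section of $\pi'$ and the additivity theorem, the step the paper dismisses with ``the desired result follows easily'', namely that the homotopy fibre of $\pi'_{\star}$ is indeed $\mathbb{K}(\mathcal{M})$.
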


\begin{proof}
By the above proposition and theorem \ref{fibration} we have a commutative diagram
$$\begin{array}{ccccc}
{\mathbb K}({\mathcal M}) & \rightarrow & {\mathbb K}({\mathcal M}\oplus R_{\mathcal M}) & \rightarrow & {\mathbb K}(R_{\mathcal M}) \\
\downarrow & & \downarrow & &  \| \\
{\mathbb K}(\pi \colon {\mathcal M}^+ \rightarrow R_{\mathcal M}) & {\mathbb K}({\mathcal M}^+) & \rightarrow & {\mathbb K}(R_{\mathcal M}) \\
\end{array}$$
where the rows are homotopy fibrations.  The desired result follows easily.
\end{proof}

\section{Quotients and Exact Sequences}

\begin{definition}
Let $R$ be a topological ring, and let $\mathcal M$ be an $R$-moduloid.  Let $\mathcal J$ be a sub-moduloid with the same set of objects as the moduloid $\mathcal M$.  Then we call the sub-moduloid $\mathcal J$ an {\em ideal} in the moduloid $\mathcal M$ if the product of a morphism from the moduloid $\mathcal J$ and a morphism from the moduloid $\mathcal M$ is always a morphism in the moduloid $\mathcal J$.
\end{definition}

\begin{definition}
Let $\mathcal J$ be an ideal in an $R$-moduloid $\mathcal M$.  Then we define the {\em quotient} ${\mathcal M}/{\mathcal J}$ to be the $R$-moduloid with the same objects as the moduloid $\mathcal M$, with morphism sets
$$\Hom (a,b)_{{\mathcal M}/{\mathcal J}} :=\Hom (a,b)_{{\mathcal M}/{\mathcal J}} /\Hom (a,b)_{\mathcal J}$$
\end{definition}

It is straightforward to check that the above definition makes sense, and does indeed define an $R$-moduloid as inherently claimed.  There is a canonical inclusion map $i\colon {\mathcal J} \rightarrow {\mathcal M}$ and quotient map $j \colon {\mathcal M}\rightarrow {\mathcal M}/{\mathcal J}$.

\begin{theorem}
The sequence
$${\mathbb K}({\mathcal J}) \stackrel{i_\star}{\rightarrow} {\mathbb K}({\mathcal M}) \stackrel{j_\star}{\rightarrow} {\mathbb K}({\mathcal M}/{\mathcal J})$$is a fibration up to homotopy.
\end{theorem}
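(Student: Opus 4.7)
The plan is to reduce the statement to an excision-type comparison of two relative $K$-theory spectra, using the fibration theorem (Theorem~\ref{fibration}) together with the definition of ${\mathbb K}$ for non-unital moduloids. First, I observe that the quotient homomorphism $j\colon {\mathcal M}\rightarrow {\mathcal M}/{\mathcal J}$ lifts in an obvious way to a homomorphism of unital moduloids $j^+ \colon {\mathcal M}^+ \rightarrow ({\mathcal M}/{\mathcal J})^+$ acting as the identity on the scalar summand. Applying Theorem~\ref{fibration} to $j^+$ produces a homotopy fibration
$${\mathbb K}(j^+\colon {\mathcal M}^+ \rightarrow ({\mathcal M}/{\mathcal J})^+) \rightarrow {\mathbb K}({\mathcal M}^+) \rightarrow {\mathbb K}(({\mathcal M}/{\mathcal J})^+).$$

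Next, since ${\mathbb K}({\mathcal M})$ and ${\mathbb K}({\mathcal M}/{\mathcal J})$ are by definition the homotopy fibres of the unitisation projections ${\mathbb K}({\mathcal M}^+)\rightarrow {\mathbb K}(R_{\mathcal M})$ and ${\mathbb K}(({\mathcal M}/{\mathcal J})^+)\rightarrow {\mathbb K}(R_{\mathcal M})$, and since these two defining fibrations share the common target ${\mathbb K}(R_{\mathcal M})$ with the induced map between targets being the identity, the standard ``fibre of a map of fibrations'' argument for spectra identifies
$$\mathrm{hofib}\bigl({\mathbb K}({\mathcal M}) \rightarrow {\mathbb K}({\mathcal M}/{\mathcal J})\bigr) \ \simeq\ \mathrm{hofib}\bigl({\mathbb K}({\mathcal M}^+)\rightarrow {\mathbb K}(({\mathcal M}/{\mathcal J})^+)\bigr) \ =\ {\mathbb K}(j^+).$$
A quick verification on morphism sets shows that the commutative diagram of moduloids
$$\begin{array}{ccc}
{\mathcal J}^+ & \rightarrow & {\mathcal M}^+ \\
\downarrow \pi & & \downarrow j^+ \\
R_{\mathcal M} & \rightarrow & ({\mathcal M}/{\mathcal J})^+ \\
\end{array}$$
(with bottom horizontal the canonical split inclusion $\lambda\mapsto 0+\lambda$) is in fact a pullback. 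Functoriality of the relative $K$-theory construction therefore supplies a natural comparison map ${\mathbb K}({\mathcal J}) = {\mathbb K}(\pi) \rightarrow {\mathbb K}(j^+)$, and the theorem reduces to proving that this comparison is a homotopy equivalence.

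That last step --- showing the above pullback square of moduloids induces a homotopy pullback of $K$-theory spectra --- is the main obstacle; it is genuinely an excision-type statement and the only point where a non-formal argument is required. I would attack it by unwinding the definitions: both $N_\bullet(\pi)$ and $N_\bullet(j^+)$ are constructed levelwise as pullbacks of simplicial additive topological categories involving the path construction $PN_\bullet$. The key structural input to exploit is that $R_{\mathcal M}\hookrightarrow ({\mathcal M}/{\mathcal J})^+$ is a \emph{split} inclusion of unital moduloids, with splitting provided by $\pi_{{\mathcal M}/{\mathcal J}}$; by the isomorphism already established for unital moduloids in the previous section this gives a decomposition $({\mathcal M}/{\mathcal J})^+ \cong ({\mathcal M}/{\mathcal J})\oplus R_{\mathcal M}$. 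Combining this splitting with the additivity theorem and the cofinality theorem should reduce the comparison to a levelwise equivalence of classifying spaces, completing the argument.
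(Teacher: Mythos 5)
Your formal reductions are sound, and in fact they mirror the scaffolding of the paper's own argument: the identification of $\mathrm{hofib}\bigl({\mathbb K}({\mathcal M})\rightarrow {\mathbb K}({\mathcal M}/{\mathcal J})\bigr)$ with $\mathrm{hofib}\bigl({\mathbb K}({\mathcal M}^+)\rightarrow {\mathbb K}(({\mathcal M}/{\mathcal J})^+)\bigr)$ is essentially the three-by-three diagram chase the paper performs in the non-unital case, and your observation that ${\mathcal J}^+$ is the pullback of ${\mathcal M}^+ \rightarrow ({\mathcal M}/{\mathcal J})^+ \leftarrow R_{\mathcal M}$ is correct. The difficulty is that everything up to that point is bookkeeping with unitisations; the entire content of the theorem sits in the step you defer --- that this Milnor square induces a homotopy cartesian square of $K$-theory spectra --- and the attack you sketch for it does not go through. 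The decomposition $({\mathcal M}/{\mathcal J})^+\cong ({\mathcal M}/{\mathcal J})\oplus R_{\mathcal M}$ requires ${\mathcal M}/{\mathcal J}$ to be unital, and even granting that, substituting the resulting splittings ${\mathbb K}({\mathcal M}^+)\simeq {\mathbb K}({\mathcal M})\times{\mathbb K}(R_{\mathcal M})$ and ${\mathbb K}(({\mathcal M}/{\mathcal J})^+)\simeq{\mathbb K}({\mathcal M}/{\mathcal J})\times{\mathbb K}(R_{\mathcal M})$ into your comparison of fibres merely cancels the ${\mathbb K}(R_{\mathcal M})$ factors and hands you back the original assertion that ${\mathbb K}({\mathcal J})$ is the fibre of ${\mathbb K}({\mathcal M})\rightarrow{\mathbb K}({\mathcal M}/{\mathcal J})$. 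The sketch is circular rather than reductive, and you correctly flag this as the main obstacle without surmounting it.

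What the paper supplies at exactly this point is a direct categorical identification rather than an appeal to excision. When ${\mathcal M}$ is unital, the unitisation ${\mathcal J}^+$ embeds in ${\mathcal M}$ via $x+\lambda\mapsto x+\lambda e_a$, and one checks that the square with vertices $N_\bullet{\mathcal J}^+$, $N_\bullet R_{\mathcal M}$, $N_\bullet {\mathcal M}$, $N_\bullet ({\mathcal M}/{\mathcal J})$ is a strict pullback of simplicial additive categories; pasting it onto the defining pullback of $N_\bullet(\pi\colon{\mathcal J}^+\rightarrow R_{\mathcal M})$ produces an isomorphism $N_\bullet(\pi\colon{\mathcal J}^+\rightarrow R_{\mathcal M})\cong N_\bullet(j\colon{\mathcal M}\rightarrow{\mathcal M}/{\mathcal J})$ between the two relative constructions, after which Theorem \ref{fibration} settles the unital case and your three-by-three argument disposes of the rest. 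Without an identification of this kind --- some genuine input at the level of the $N_\bullet$ construction itself --- your proposal establishes only that the theorem is equivalent to an excision statement, not that the excision statement holds.
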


\begin{proof}
Suppose that the moduloid $\mathcal M$ is unital.  Recall that the spectrum ${\mathbb K}({\mathcal J})$ is defined by iterating the simplicial additive category $N_\bullet (\pi \colon {\mathcal J}^+ \rightarrow R_{\mathcal M})$ defined through the pull-back diagram
$$\begin{array}{ccc}
N_\bullet (\pi \colon {\mathcal J}^+\rightarrow R_{\mathcal M}) & \rightarrow & PN_\bullet {\mathcal R_{\mathcal M}} \\
\downarrow & & \downarrow \\
N_\bullet {\mathcal J}^+ & \rightarrow & N_\bullet R_{\mathcal M} \\
\end{array}$$

Since the moduloid $\mathcal M$ is unitial, there is an obvious inclusion ${\mathcal J}^+\hookrightarrow {\mathcal M}$ and a commutative diagram
$$\begin{array}{ccc}
N_\bullet (\pi \colon {\mathcal J}^+\rightarrow R_{\mathcal M}) & \rightarrow & PN_\bullet {\mathcal R_{\mathcal M}} \\
\downarrow & & \downarrow \\
N_\bullet {\mathcal J}^+ & \rightarrow & N_\bullet R_{\mathcal M} \\
\downarrow & & \downarrow \\
N_\bullet {\mathcal M} & \rightarrow & {\mathcal M}/{\mathcal J} \\
\end{array}$$

The outer square of this diagram is still a pull-back square.  It follows that we have an isomorphism
$$N_\bullet (\pi \colon {\mathcal J}^+\rightarrow R_{\mathcal M}) \cong
N_\bullet (j \colon {\mathcal M}\rightarrow {\mathcal M}/{\mathcal J})$$

It follows from theorem \ref{fibration} that we thus have a fibration
$${\mathbb K}({\mathcal J}) \stackrel{i_\star}{\rightarrow} {\mathbb K}({\mathcal M}) \stackrel{j_\star}{\rightarrow} {\mathbb K}({\mathcal M}/{\mathcal J})$$is a fibration up to homotopy.
up to homotopy.

Now, suppose that the moduloid $\mathcal M$ is not unital.  Then by the above argument we have a commutative diagram
$$\begin{array}{ccccc}
{\mathbb K}({\mathcal J}) & \rightarrow & {\mathbb K}({\mathcal M}) & \rightarrow & {\mathbb K}({\mathcal M}/{\mathcal J}) \\
\| & & \downarrow & & \downarrow \\
{\mathbb K}({\mathcal J}) & \rightarrow & {\mathbb K}({\mathcal M}^+) & \rightarrow & {\mathbb K}({\mathcal M}^+/{\mathcal J}) \\
\downarrow & & \downarrow & & \downarrow \\
\ast & \rightarrow & {\mathbb K}(R_{\mathcal M}) & = & {\mathbb K}(R_{\mathcal M}) \\
\end{array}$$
where the columns and bottom two rows are fibrations up to homotopy.  A standard diagram chase tells us that the top row is also a fibration up to homotopy, and we are done.
\end{proof}

\section{Products}

Tensor products of moduloids can be defined analogously to tensor products of modules over commutative rings.

\begin{definition}
Let $R$ be a commutative unital topological ring.  Let $\mathcal M$ and $\mathcal N$ be $R$-moduloids.  Then we define the {\em tensor product}, ${\mathcal M}\otimes_R {\mathcal N}$, of the moduloids $\mathcal M$ and $\mathcal N$ to be the collection of objects
$$\{ a\otimes b\ |\ a\in \Ob ({\mathcal M}) , b\in \Ob ({\mathcal N}) \}$$
and morphisms
$$\Hom (a\otimes b , a'\otimes b' )_{{\mathcal M}\otimes {\mathcal N}} = \Hom (a,a')_{\mathcal M}\otimes_R \Hom (b,b')_{\mathcal N}$$
\end{definition}

If $\mathcal R$ and $\mathcal S$ are ringoids, then we define the tensor product by writing ${\mathcal R}\otimes {\mathcal S} := {\mathcal R}\otimes_{\mathbb Z} {\mathcal S}$

The following result is straightforward to check.

\begin{proposition}
The above category ${\mathcal M}\otimes_R {\mathcal N}$ is an $R$-moduloid.  The composition law is defined by the formula
$$(x\otimes y)(x'\otimes y') = xx' \otimes yy'$$
\noproof
\end{proposition}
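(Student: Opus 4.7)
The plan is to verify the moduloid axioms one at a time, reducing everything to statements about elementary tensors and then extending by the universal property of the tensor product of topological $R$-modules. First I would define the composition law on elementary tensors by the stated formula $(x\otimes y)(x'\otimes y') = xx' \otimes yy'$; since composition in both $\mathcal M$ and $\mathcal N$ is $R$-bilinear, the assignment $(x,y,x',y') \mapsto xx' \otimes yy'$ descends to an $R$-linear map
$$\Hom(b,c)_{\mathcal M}\otimes_R \Hom(b',c')_{\mathcal N}\otimes_R \Hom(a,b)_{\mathcal M}\otimes_R \Hom(a',b')_{\mathcal N} \rightarrow \Hom(a,c)_{\mathcal M}\otimes_R \Hom(a',c')_{\mathcal N},$$
which after rearranging tensor factors yields the required composition law. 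Continuity follows because composition in $\mathcal M$ and $\mathcal N$ is continuous and the tensor product functor on topological $R$-modules preserves continuous bilinear maps.

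Next I would check associativity. On elementary tensors this is immediate: using associativity in $\mathcal M$ and $\mathcal N$,
$$((x\otimes y)(x'\otimes y'))(x''\otimes y'') = (xx')x'' \otimes (yy')y'' = x(x'x'')\otimes y(y'y'') = (x\otimes y)((x'\otimes y')(x''\otimes y'')).$$
Since elementary tensors generate the morphism modules and composition is $R$-bilinear, associativity extends to all morphisms.

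The $R$-module structure on each $\Hom(a\otimes b, a'\otimes b')_{{\mathcal M}\otimes_R {\mathcal N}}$ is provided by the topological tensor product itself. For the axiom $r((x\otimes y)(x'\otimes y')) = (r(x\otimes y))(x'\otimes y')$, I would verify it on elementary tensors by observing $r(xx'\otimes yy') = (rx)x' \otimes yy'$ and then apply bilinearity. Finally, if either $\mathcal M$ or $\mathcal N$ is unital, the elementary tensors of identity morphisms serve as identities in the tensor product, but since the proposition only claims a (possibly non-unital) moduloid structure this is not strictly required.

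The main obstacle, such as it is, is purely bookkeeping: making sure the topological tensor product of $R$-modules is handled so that continuity of the extended composition is automatic. Once one fixes the convention that $\otimes_R$ denotes the topological tensor product (the coequaliser in topological $R$-modules of the two natural maps $M\otimes_{\mathbb Z} R\otimes_{\mathbb Z} N \rightrightarrows M\otimes_{\mathbb Z} N$, equipped with the quotient topology), every step above goes through without incident, which is why the proposition is stated as straightforward.
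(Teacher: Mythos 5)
Your verification is correct and is exactly the direct check the paper has in mind: the paper states this result with no proof at all, declaring it ``straightforward to check,'' and your argument (well-definedness of the composition via bilinearity and the balanced relation in $\otimes_R$, associativity on elementary tensors extended by linearity, and the one-sided $R$-action axiom) is the intended routine verification. The only point worth noting is that the paper's moduloid axiom literally gives only $r(xy)=(rx)y$ rather than full $R$-bilinearity of composition, but this one-sided identity applied in each factor, together with commutativity of $R$ and the defining relation of $\otimes_R$, already yields the balancedness your construction needs, so nothing is missing.
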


In fact, the above tensor product also satisfies an appropriate universal property.  Although this property is easy to verify, we will not prove it here since we do not need to use it.  Observe, however, that for $R$-moduloids $\mathcal M$ and $\mathcal N$ we have a canonical map
$${\mathcal M}_\oplus \times {\mathcal N}_\oplus \rightarrow ({\mathcal M}\otimes_R{\mathcal N})_\oplus$$
defined by writing
$$(a_1 \oplus \cdots \oplus a_m, b_1 \oplus \cdots \oplus b_n) \mapsto \oplus_{i,j=1}^{m,n} a_i\otimes b_j$$

The above map clearly induces a $\Sigma_k \times \Sigma_n$-equivariant map
$$wN_\bullet^{(k)}{\mathcal M}_\oplus \times wN_\bullet^{(n)}{\mathcal N}_\oplus \rightarrow wN_\bullet^{(n+k)}({\mathcal M}\otimes_R{\mathcal N})_\oplus$$
and so a natural map of symmetric spectra
$${\mathbb K}({\mathcal M})\wedge {\mathbb K}({\mathcal N})\rightarrow {\mathbb K}({\mathcal M}\otimes_R {\mathcal N})$$

\begin{definition}
The above map
$${\mathbb K}({\mathcal M})\wedge {\mathbb K}({\mathcal N})\rightarrow {\mathbb K}({\mathcal M}\otimes_R {\mathcal N})$$
is called the {\em exterior product map}.
\end{definition}

\section{Banach Categories and Group Completion}

Recall that a topological ringoid $\mathcal A$ is called a {\em Banach category} if each morphism set $\Hom (a,b)_{\mathcal A}$ is a Banach space, and the inequality
$$\| xy \| \leq \| x\| \| y\|$$
holds whenever $x$ and $y$ are composable morphisms.  Since a Banach category $\mathcal A$ is already a topological ringoid, we can use our earlier definitions to form the $K$-theory spectrum ${\mathbb K}({\mathcal A})$.

However, the papers \cite{Jo2,Mitch2.5} define $K$-theory for $C^\ast$-categories, of which Banach categories are a special case.  The definitions in either of these papers could easily be generalised to Banach categories.  The definition amounts to the following.

\begin{definition}
Let $\mathcal A$ be a Banach category.  Fix an ordering on the set of objects $\Ob ({\mathcal A})$.  For each object $a\in \Ob ({\mathcal A}_\oplus )$, let $GL(A)$ denote the space of invertible endomorphisms $x\in \Hom (a,a)_{{\mathcal A}_\oplus }$.

We define the topological group $GL_\infty ({\mathcal A})$ to be the direct limit
$$\bigcup_{a_1 \leq \cdots \leq a_n} GL (a_1 \oplus \cdots \oplus a_n )$$
under the inclusions
$$GL( a_1 \oplus \cdots \oplus a_{k-1}\oplus a_{k+1} \oplus \cdots \oplus a_n ) \hookrightarrow GL( a_1 \oplus \cdots \oplus a_n )$$
defined in the obvious way by looking at matrices.
\end{definition}

The space $GL_\infty ({\mathcal A})$ is not functorial.  However, it can if necessary be replaced by a homotopy-equivalent space that is functorial.   We do not need to do this here; see \cite{Mitch2.5} for details.

\begin{definition}
We define the {\em analytic $K$-theory groups} of the Banach category $\mathcal A$ to be the homotopy groups
$$K_n ({\mathcal A})^\mathrm{Analytic} := \pi_n BGL_\infty ({\mathcal A})$$
when $n\geq 1$.
\end{definition}

In contrast, the {\em algebraic $K$-theory groups} of a topological ringoid $\mathcal R$ are defined by writing
$$K_n ({\mathcal A}) := \pi_n {\mathbb K}({\mathcal A}) = \pi_n \Omega |wN_\bullet {\mathcal A}_\oplus |$$
when $n\geq 1$.

The main purpose of this section is to prove the fact that the algebraic and analytic $K$-theory groups of a Banach category are isomorphic.

The argument required is an application of the group completion theorem.  The version given in \cite{McS} suffices for our purposes.

\begin{theorem}
Let $M$ be a topological monoid, and let $H_\star (M)$ be the associated Pontjagin rind built from the induced product on singular homology.  Suppose that the monoid $\pi = \pi_0 (M)$ is in the centre of the ring $H_\star (M)$.  Then the ring homomorphism $i_\star \colon H_\star (M)[\pi^{-1}] \rightarrow H_\star (\Omega BM)$ induced by the canonical map $i\colon M\rightarrow \Omega BM$ induces an isomorphism
$$H_\star (M)[\pi^{-1}] \rightarrow H_\star (\Omega BM)$$
\noproof
\end{theorem}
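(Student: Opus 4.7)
The plan is to follow the classical McDuff--Segal strategy, replacing the actual fibration $M \to EM \to BM$ (which typically fails to be a fibration) with an appropriate quasifibration after localising.

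First I would choose simplicial models: let $BM$ be the realisation of the simplicial space with $n$-simplices $M^n$ and the usual face/degeneracy maps, and let $EM$ be the realisation of the simplicial space with $n$-simplices $M^{n+1}$, acted on on the right by $M$ via multiplication in the last coordinate. The quotient $EM/M \cong BM$ gives the projection $p\colon EM \to BM$, with $EM$ contractible (it admits a simplicial contraction onto the basepoint). The fibre over the canonical basepoint is a copy of $M$, and the claim of the group completion theorem reduces to showing that $p$ becomes a quasifibration after inverting $\pi = \pi_0(M)$ in homology, so that one can identify the homology of the fibre with that of the loop space $\Omega BM$ of the contractible total space.

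Second, I would filter $BM$ by its skeleta $F_n$ and analyse the preimages $p^{-1}(F_n)$ inductively. The key geometric lemma is that if one enlarges the fibre $M$ to the mapping telescope $M_\infty := \mathop{hocolim}(M \xrightarrow{\cdot g_1} M \xrightarrow{\cdot g_2} \cdots)$ taken over a cofinal sequence of generators of $\pi$, then the inclusions of fibres induced by degeneracies, which on homology correspond to right multiplication by components of $M$, become homology isomorphisms. The centrality hypothesis on $\pi \subseteq H_*(M)$ is precisely what is needed to make $H_*(M_\infty)$ well-defined independently of the choice of ordering of the $g_i$ and to identify it as the localisation $H_*(M)[\pi^{-1}]$. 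A standard Dold--Thom type argument then promotes the pointwise fibre equivalence into the statement that the map of pairs $(p^{-1}(F_n), p^{-1}(F_{n-1})) \to (F_n, F_{n-1})$ is, after replacing fibres by $M_\infty$, a relative fibration up to homology.

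Third, I would run the Leray--Serre spectral sequence for the ``localised'' quasifibration $M_\infty \to EM \to BM$. Since $\pi$ is central in $H_*(M)$, the local system on $BM$ with coefficients $H_*(M_\infty) = H_*(M)[\pi^{-1}]$ is constant, so the $E^2$-page reads $H_p(BM; H_q(M)[\pi^{-1}])$ and converges to $H_*(EM) = H_*(\mathrm{pt})$. Comparing with the Serre spectral sequence of the genuine path-loop fibration $\Omega BM \to P(BM) \to BM$ and invoking a Zeeman-type comparison theorem, the induced map $H_*(M)[\pi^{-1}] \to H_*(\Omega BM)$ must be an isomorphism.

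The main obstacle is the middle step: proving that the projection $p\colon EM \to BM$ is a quasifibration after inverting $\pi_0(M)$, because the fibre inclusions across face maps literally are multiplication-by-component maps and are therefore only homology equivalences after passing to the telescope $M_\infty$. Checking that the centrality assumption really suffices to make this localisation well-defined and compatible with the simplicial structure on $BM$ is the delicate technical core of the argument; everything else is a formal consequence of contractibility of $EM$ plus a spectral sequence comparison.
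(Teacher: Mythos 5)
The paper does not actually prove this theorem: it is stated with a citation to McDuff--Segal and imported as a black box, so there is no internal proof to compare yours against. Judged on its own terms, your outline follows the right overall architecture --- replace the fibre $M$ by a telescope $M_\infty$ whose homology is the localisation $H_\star (M)[\pi^{-1}]$ (centrality being exactly what identifies the filtered colimit with the localisation), observe that $EM\times_M M_\infty$ is contractible, and deduce that the fibre computes the homology of $\Omega BM$. This is essentially the shape of the argument in the cited source.

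However, two steps as written contain genuine gaps. First, ``quasifibration'' is the wrong notion: the Dold--Thom criteria detect weak homotopy equivalences of fibres onto homotopy fibres, whereas right multiplication by $g\in \pi$ on $M_\infty$ is only a homology equivalence; the entire point of the McDuff--Segal paper is to introduce \emph{homology fibrations} and to prove homology analogues of the Dold--Thom gluing criteria, and your argument needs that machinery, not quasifibrations. Second, and more seriously, your claim that centrality of $\pi$ in $H_\star (M)$ makes the local system on $BM$ with fibre $H_\star (M)[\pi^{-1}]$ constant is false: the monodromy of the loop labelled by $g\in\pi$ acts by multiplication by the unit $[g]$, which is an automorphism of the localised module but not the identity (already on $H_0$ it translates the group ring of the group completion of $\pi$). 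Consequently the Zeeman comparison of Serre spectral sequences cannot be invoked in the naive form you state; mishandling this non-trivial coefficient system is the classical trap in spectral-sequence proofs of group completion. The clean way out --- and the route taken in the reference the paper cites --- is to prove directly that $M_\infty \rightarrow EM\times_M M_\infty \rightarrow BM$ is a homology fibration, so that the fibre is homology-equivalent to the homotopy fibre $\Omega BM$ of a map with contractible total space, with no spectral sequence comparison needed. (A smaller quibble: your telescope is indexed by a cofinal \emph{sequence} of elements of $\pi$, which presumes $\pi$ countable; in general one must take a filtered colimit.)
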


Along with the group completion theorem, we need a sufficiently general version of Whitehead's theorem.  The following result is proved in \cite{May4}.

\begin{theorem} \label{Whitehead}
Let $X$ and $Y$ be path-connected $H$-spaces.  Suppose we have a map of $H$-spaces $f\colon X\rightarrow Y$ such that the induced maps of homology groups $f_\star \colon H_p (X)\rightarrow H_p (Y)$ are all isomorphisms.  Then the map $f$ is a weak homotopy equivalence.
\noproof
\end{theorem}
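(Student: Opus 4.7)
The strategy I would adopt is to factor the problem into two standard ingredients: first, that any path-connected $H$-space is a \emph{simple} space (the action of $\pi_1$ on every $\pi_n$ is trivial, and $\pi_1$ itself is abelian); second, that an integer-homology isomorphism between path-connected simple spaces is a weak homotopy equivalence. Once the first ingredient is known, the $H$-space hypothesis on $f$ plays no further role, since a map between simple spaces inducing homology isomorphisms is automatically a weak equivalence.

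For the first ingredient, I would apply the Eckmann-Hilton argument to the two multiplications on loops based at the identity (ordinary concatenation and the $H$-space multiplication); this shows $\pi_1(X)$ is abelian and identifies the two products. A similar argument shows that if $\alpha \in \pi_1(X)$ and $\beta \in \pi_n(X)$, then $\alpha \cdot \beta = \beta$, so $X$ is simple, and likewise for $Y$. The Hurewicz theorem then identifies $\pi_1$ with $H_1$ in both cases, so the hypothesis already ensures $f$ induces an isomorphism on $\pi_1$.

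For the second ingredient, I would replace $f$ by a Serre fibration and analyse its homotopy fibre $F$. Because $Y$ is simple, the Serre spectral sequence
$$E^2_{p,q} = H_p(Y; H_q(F)) \Longrightarrow H_{p+q}(X)$$
has constant local coefficients. Applying Zeeman's comparison theorem to the map of fibrations $F \to X \to Y$ and the trivial fibration $\ast \to Y \to Y$, an induction on $q$ yields $\tilde{H}_q(F) = 0$ for all $q$. Since $X$ and $Y$ are simple, the long exact sequence of homotopy groups (and the triviality of the $\pi_1(Y)$-action inherited from simplicity of $Y$) shows that $F$ is itself nilpotent, in fact simple. The classical Whitehead theorem for nilpotent spaces therefore forces $F$ to be weakly contractible, and the long exact sequence of the fibration then gives that $f$ is a weak equivalence.

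The genuine work lies in the last step: promoting vanishing homology of $F$ to weak contractibility. This ultimately rests on the fact that simple (or more generally nilpotent) spaces admit Postnikov systems whose $k$-invariants are detected by ordinary cohomology, so that a homology isomorphism forces an isomorphism at every Postnikov stage. The other ingredients — Eckmann-Hilton, the Serre spectral sequence with untwisted coefficients, and the comparison argument — become routine once this framework is in place. The $H$-space hypothesis is essential only at the very beginning, to guarantee the simplicity of $X$ and $Y$; once that is established, the argument is insensitive to any further structure on $f$.
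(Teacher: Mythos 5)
The paper does not actually prove this statement: it is cited from May's article \emph{The dual Whitehead theorems} \cite{May4} and marked as proved there, so there is no internal argument to compare yours against. Your two-step decomposition --- (i) path-connected $H$-spaces are simple, via Eckmann--Hilton and the standard triviality of the $\pi_1$-action on $\pi_n$; (ii) an integral homology isomorphism of nilpotent spaces is a weak equivalence --- is the standard textbook route and is sound in outline. It differs from May's actual method, which avoids the fibre-by-fibre spectral sequence comparison entirely: he sets up an axiomatic Whitehead theorem in terms of Postnikov towers built from principal fibrations with Eilenberg--Mac~Lane fibres, so that an $H_\star$-isomorphism is promoted to a $\pi_\star$-isomorphism one principal stage at a time. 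What May's framework buys is a clean dualisation (the same formalism yields the classical Whitehead theorem) and no appeal to Zeeman's comparison theorem; what yours buys is economy of prerequisites if the Serre spectral sequence is already in hand.

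One step in your write-up is stated too strongly. Simplicity of $Y$ does not make the local coefficient system $H_p(Y;H_q(F))$ constant: the relevant action of $\pi_1(Y)$ on $H_\star(F)$ is a holonomy action on the fibre, not the action of $\pi_1(Y)$ on $\pi_\star(Y)$. (Already on $H_0(F)$ it is the permutation action on $\pi_1(Y)/f_\star\pi_1(X)$, which is exactly why you must first use Hurewicz to see that $F$ is connected.) What is true, and what the argument actually needs, is that nilpotency of the total space $X$ forces $\pi_1(Y)$ to act nilpotently on each $H_q(F)$; one then invokes the nilpotent-coefficient version of the Zeeman comparison theorem, or filters the coefficient system by the lower central series of the action and compares subquotients. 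Similarly, the fibre $F$ need only be asserted to be nilpotent, not simple: nilpotence is what the fibration actually delivers, and it is all that Dror's theorem (an acyclic nilpotent space is weakly contractible) requires. With these two adjustments your argument is complete.
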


Before applying the above two results, observe that we can simplify our calculations slightly if we observe that there are isomorphisms
$$K_n ({\mathcal A})^\mathrm{Analytic} \cong \pi_n BGL_\infty (\Sk ({\mathcal A}))$$
and
$$K_n ({\mathcal A}) \cong \pi_n \left( \Omega B \amalg_{a\in Sk ({\mathcal A}_\oplus )} BGL (a) \right)$$
where $\Sk ({\mathcal C})$ denotes the skeleton of a category $\mathcal C$.  Roughly speaking, our plan is to prove that the spaces $BGL_\infty (\Sk ({\mathcal A}))$ and $\Omega B \amalg_{a\in Sk ({\mathcal A}_\oplus )} BGL (a)$ are weakly homotopy-equivalent.

In order to apply the group completion theorem, we want to define a topological monoid structure on the space
$$M:=\amalg_{a\in \Sk ({\mathcal A}_\oplus )} BGL (a)$$

Any two objects $a,b\in \Sk ({\mathcal A}_\oplus )$ have a unique biproduct $a\oplus b$.  Let $i_a \colon a\rightarrow a\oplus b$, $i_b \colon b\rightarrow a\oplus b$, $p_a \colon a\oplus b\rightarrow a$, and $p_b \colon a\oplus b\rightarrow b$ be the canonical morphisms associated to this biproduct, and define maps $j_a \colon GL(a)\rightarrow GL(a\oplus b)$ and $j_b \colon GL(b)\rightarrow GL(a\oplus b)$ by the formulae
$$j_a (x) = i_a x p_a + i_b p_b \qquad j_b (y) = i_a p_a + i_b y p_b$$
repsectively.

After delooping, we have induced continuous maps $Bj_a \colon BGL(a)\rightarrow BGL(a\oplus b)$ and $Bj_b \colon BGL(b)\rightarrow BGL(a\oplus b)$.  The space $M$ is therefore a topological monoid, with operation
$$x\oplus y = Bj_a (x) + Bj_b (y)$$

Observe that the set $\pi = \Ob (\Sk ({\mathcal A}_\oplus ))$ can be identified with the set of path-components $\pi_0 (M)$.  Let $G$ be the Grothendieck completion of the monoid $\pi$.  Then there is a homomorphism of topological monoids
$$\alpha \colon M\rightarrow BGL_\infty (\Sk ({\mathcal A})) \times G$$
defined by taking a point $x\in BGL(a)$ to the pair $(i(x),a)$ where $i\colon BGL (a)\rightarrow BGL_\infty ({\mathcal A})$ is the canonical inclusion.

\begin{lemma} \label{main_lemma}
The above homomorphism induces a ring isomorphim
$$\alpha_\star \colon H_\star (M)[\pi^{-1}] \rightarrow H_\star (BGL_\infty (\Sk ({\mathcal A}) \times G)$$
\end{lemma}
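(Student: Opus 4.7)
My plan is to describe both sides of the claimed isomorphism as filtered colimits over finite biproducts of objects, and then identify $\alpha_\star$ with the canonical comparison map. On the target, the K\"unneth formula (applicable since $G$ is discrete) gives
$$H_\star(BGL_\infty(\Sk({\mathcal A})) \times G) \cong H_\star(BGL_\infty(\Sk({\mathcal A}))) \otimes_{\mathbb Z} {\mathbb Z}[G],$$
and by the definition of $BGL_\infty(\Sk({\mathcal A}))$ as a directed union, the first factor $H_\star(BGL_\infty(\Sk({\mathcal A})))$ is the filtered colimit of the groups $H_\star(BGL(a))$ as $a$ ranges over objects of $\Sk({\mathcal A}_\oplus)$ ordered by direct summand inclusion.

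On the source, the decomposition $M = \amalg_{a\in \pi} BGL(a)$ gives $H_\star(M) = \bigoplus_{a\in \pi} H_\star(BGL(a))$, with Pontryagin product restricting on each summand to the stabilisation map $H_\star(BGL(a)) \otimes H_\star(BGL(b)) \to H_\star(BGL(a\oplus b))$ coming from the biproduct. The next step, which I expect to be the technical heart of the argument, is to verify that $\pi \subset H_0(M)$ is central. The two stabilisations computing left versus right multiplication by $[a]\in \pi$ on $H_\star(BGL(b))$ differ by conjugation by the permutation isomorphism $a\oplus b \to b\oplus a$, viewed as an element of $GL(a\oplus b)$ after the identification of these objects in the skeleton. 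Since inner automorphisms of a topological group act trivially on the homology of the classifying space, these two maps agree, and centrality follows.

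Once centrality is in hand, $H_\star(M)[\pi^{-1}]$ splits according to the Grothendieck completion $G$, with the component indexed by $[a] - [b] \in G$ computed as the filtered colimit of $H_\star(BGL(a\oplus c))$ over objects $c$ having $b$ as a direct summand. Each such colimit is canonically isomorphic to $H_\star(BGL_\infty(\Sk({\mathcal A})))$, so summing over $G$ recovers the right-hand side $H_\star(BGL_\infty(\Sk({\mathcal A}))) \otimes {\mathbb Z}[G]$. Unpacking the definition of $\alpha$ — the canonical inclusion on each $BGL(a)$, paired with the tautological map $\pi \to G$ on components — shows that $\alpha_\star$ realises precisely this identification, so it is a ring isomorphism. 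The main obstacle remains the centrality check, which must be handled with care because $\Sk({\mathcal A}_\oplus)$ is only symmetric monoidal up to natural isomorphism rather than strictly, and the symmetry isomorphisms have to be traced through the classifying space construction.
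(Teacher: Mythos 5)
Your argument is correct in outline, but it is organised quite differently from the paper's proof. The paper argues element by element: given a class $[\sigma]\in H_k(BGL_\infty(\Sk({\mathcal A}))\times G)$ represented by a singular simplex, the compact-factorisation property of the direct limit (Proposition \ref{tp1}) forces $\im \sigma \subseteq BGL(a)\times\{g\}$ for some finite stage $a$ and some $g\in G$; homotopy-commutativity of $BGL_\infty(\Sk({\mathcal A}))$ (Proposition \ref{tp2}) then allows the summands to be rearranged so that $[\sigma]$ visibly lies in the image of $\alpha_\star$, giving surjectivity, with injectivity asserted to follow ``similarly''. You instead compute both sides globally: the source $H_\star(M)[\pi^{-1}]$ as a $G$-graded filtered colimit of the groups $H_\star(BGL(a))$, after checking centrality of $\pi$ by observing that the two stabilisations differ by conjugation by a permutation isomorphism, which acts trivially on the homology of a classifying space; and the target as the same colimit via the (trivial, since $G$ is discrete) K\"unneth splitting and commutation of homology with the directed union. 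The two proofs rest on the same two inputs --- compactness in the direct limit and commutativity up to inner automorphism --- but yours makes explicit the centrality of $\pi$ (which the paper only asserts as ``clear'' in the theorem that follows the lemma) and gives an honest account of injectivity, which the paper leaves as an exercise. The price is two points you should spell out: that the summand-inclusion maps induce a genuinely directed system on homology (different choices of inclusion differ by conjugation, so this is fine), and that homology really does commute with the directed union defining $BGL_\infty$, which is not automatic for arbitrary colimits of spaces but is exactly what the hypotheses of Proposition \ref{tp1} are there to guarantee.
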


We defer the proof of the above lemma.  Together with the group completion theorem, this lemma is the key to our main result.

\begin{theorem}
The spaces $BGL_\infty (\Sk ({\mathcal A}))\times G$ and $\Omega B \amalg_{a\in Sk ({\mathcal A}_\oplus )} BGL (a)$ are weakly homotopy-equivalent.
\end{theorem}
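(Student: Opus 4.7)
The plan is to combine the group completion theorem with Lemma \ref{main_lemma} and then apply the H-space Whitehead theorem (Theorem \ref{Whitehead}). Morally, the monoid $M$ has two natural group completions: the categorical one $BGL_\infty(\Sk(\mathcal{A})) \times G$, and the topological one $\Omega BM$. Lemma \ref{main_lemma} and the group completion theorem together identify their homologies, and the Whitehead theorem then upgrades this to a weak equivalence.

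First I would verify that the hypotheses of the group completion theorem apply to $M$. Because biproducts are symmetric up to canonical isomorphism, the monoid operation on $M$ is homotopy commutative, so $\pi=\pi_0(M)=\Ob(\Sk(\mathcal{A}_\oplus))$ lies in the centre of the Pontryagin ring $H_\star(M)$. The group completion theorem then supplies a homology isomorphism
$$i_\star \colon H_\star(M)[\pi^{-1}] \xrightarrow{\cong} H_\star(\Omega BM).$$
Next I would construct an actual map from $\Omega BM$ to $BGL_\infty(\Sk(\mathcal{A})) \times G$. Since $BGL_\infty(\Sk(\mathcal{A}))$ is path-connected and $G$ is already a group, the target is group-like, so the canonical map $BGL_\infty(\Sk(\mathcal{A}))\times G \to \Omega B(BGL_\infty(\Sk(\mathcal{A}))\times G)$ is a weak homotopy equivalence. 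Composing $\Omega B\alpha$ with a homotopy inverse of this map produces
$$\bar\alpha \colon \Omega BM \to BGL_\infty(\Sk(\mathcal{A})) \times G$$
fitting into a homotopy-commutative triangle with $i$ and $\alpha$.

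The identification $\bar\alpha_\star = \alpha_\star \circ i_\star^{-1}$ on homology, combined with Lemma \ref{main_lemma}, shows that $\bar\alpha$ is a homology isomorphism. It remains to promote this to a weak equivalence. Both source and target are H-spaces, but neither is path-connected: in both cases $\pi_0$ may be identified with the group $G$. I would therefore restrict $\bar\alpha$ to identity components. On each side, left translation by an element of $\pi_0$ is a self-homotopy equivalence (since $\pi_0$ is a group), so the map on any component is homotopy-equivalent to the map on the identity component; and the restriction to the identity component is a homology isomorphism between path-connected H-spaces. Theorem \ref{Whitehead} then yields a weak homotopy equivalence on each component, hence on the whole space.

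The main obstacle is the verification that $\bar\alpha$ is well-defined and induces the expected map on homology; this requires careful bookkeeping with the adjunction between $B$ and $\Omega$ and the fact that $BGL_\infty(\Sk(\mathcal{A}))\times G$ is its own group completion. The centrality of $\pi$ in $H_\star(M)$ is the only non-formal hypothesis of the group completion theorem, and while it follows from homotopy commutativity of biproducts, it is the other point where care is required.
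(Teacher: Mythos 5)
Your proposal follows the paper's own argument essentially step for step: centrality of $\pi$ in $H_\star(M)$, the group completion theorem combined with Lemma \ref{main_lemma}, the observation that $BGL_\infty(\Sk({\mathcal A}))\times G$ is already group-like so it agrees with $\Omega B$ of itself, restriction to the identity component, the $H$-space Whitehead theorem, and translation to the remaining components. No substantive differences; your write-up is if anything slightly more careful about the construction of the comparison map $\bar\alpha$ than the paper is.
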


\begin{proof}
The monoid $\pi$ clearly lies is the centre of the Pontrjagin ring $H_\star (M)$.  The map $\alpha$ induces a map $\Omega BM \rightarrow \Omega B(BGL_\infty (\Sk ({\mathcal A}))\times G)$ of $H$-spaces.  But the space $BGL_\infty (\Sk ({\mathcal A}))\times G$ is already a topological group, meaning it is homotopy-equivalent to the $H$-space $\Omega B(BGL_\infty (\Sk ({\mathcal A}))\times G)$.  Hence, by the above lemma and the group completion theorem, the canonical map of $H$-spaces
$$\Omega B \amalg_{a\in \Sk ({\mathcal A}_\oplus )} BGL(a) \rightarrow BGL_\infty (\Sk ({\mathcal A}))\times G$$
induces an isomorphism at the level of homology.

Now, let $X_0$ denote the path-component of the $H$-space $\Omega B\amalg_{a\in \Sk ({\mathcal A}_\oplus )} BGL (a)$ containing the unit.  Then we have a map of path-connected $H$-spaces
$$X_0 \rightarrow BGL_\infty (\Sk ({\mathcal A})$$
that induces an isomorphism at the level of homology.  Hence, by theorem \ref{Whitehead}, the map $X_0 \rightarrow BGL_\infty (\Sk ({\mathcal A}))$ is a homotopy-equivalence.

Each path-component of the space $\Omega B \amalg_{a\in Sk ({\mathcal A}_\oplus )} BGL (a)$ is homotopy-equivalent to the space $X_0$.  Looking at each path-component in turn, it follows that we have a homotopy-equivalence
$$\Omega B \amalg_{a\in \Sk ({\mathcal A}_\oplus )} BGL(a) \rightarrow BGL_\infty (\Sk ({\mathcal A}))\times G$$
as required.
\end{proof}

Hence, by the definitions of the relevant $K$-theory groups we obtain the isomorphism
$$K_n ({\mathcal A})\cong K_n ({\mathcal A})^\mathrm{Analytic}$$
that we were looking for.

Of course, we still need to prove lemma \ref{main_lemma}.  We need two technical results about analytic $K$-theory.  The first of these results is a special case of theorem 3.1 in \cite{Mitch2.5}.  See also the appendix of \cite{Mil}.

\begin{proposition} \label{tp1}
Let $(X_i , \phi_{ij})_{i\in I}$ be a directed family of topological spaces such that each map $\phi_{ij} \colon X_i \rightarrow X_j$ is injective, each space $X_i$ has the $T_1$-separation property\footnote{That is to say that finite subsets are closed.}, and for each element $j\in I$ there are only finitely many elements $i\in I$ such that $i\leq j$.

Let $X$ be the direct limit of this family, and let $\phi_i \colon X_i \rightarrow X$ be the canonical map associated to each space $X_i$.  Suppose that $K\subseteq X$ is a compact set.  Then $K\subseteq \phi_i [X_i]$ for some space $X_i$.
\noproof
\end{proposition}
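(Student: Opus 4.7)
The plan is to argue by contradiction: assuming $K \not\subseteq \phi_i[X_i]$ for every $i \in I$, I will construct an infinite closed discrete subset of $K$, contradicting the fact that a closed discrete subspace of a compact space must be finite.

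First I would inductively build a strictly increasing chain of indices $i_1 < i_2 < \cdots$ in $I$ together with pairwise distinct points $k_n \in K$ such that $k_n \in \phi_{i_n}[X_{i_n}] \setminus \phi_{i_{n-1}}[X_{i_{n-1}}]$. At each step the nonempty set $K \setminus \phi_{i_{n-1}}[X_{i_{n-1}}]$ supplies $k_n$, and an upper bound $i_n \geq i_{n-1}$ with $k_n \in \phi_{i_n}[X_{i_n}]$ exists by directedness applied to $i_{n-1}$ and any index $j_n$ with $k_n \in \phi_{j_n}[X_{j_n}]$. The inequality $i_{n-1} < i_n$ is automatic, and distinctness of the $k_n$ follows since $k_m \in \phi_{i_m}[X_{i_m}] \subseteq \phi_{i_{n-1}}[X_{i_{n-1}}]$ for $m < n$ while $k_n$ is excluded from that set.

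Set $S = \{k_n : n \geq 1\}$. The heart of the proof is to show $S$ is closed and discrete in $X$. Closedness reduces, via the direct-limit topology, to showing that $\phi_j^{-1}(S)$ is closed in $X_j$ for each $j \in I$; since $X_j$ is $T_1$, it suffices to prove this preimage is finite. A first observation is that $k_n \in \phi_j[X_j]$ forces $j \not\leq i_{n-1}$, because otherwise $\phi_j[X_j] \subseteq \phi_{i_{n-1}}[X_{i_{n-1}}]$ would contradict the choice of $k_n$. To strengthen this to finiteness, I would combine the injectivity of the transition maps, the directedness of $I$, and the finite-downset hypothesis applied at a common upper bound $m \geq i_n, j$, exploiting that only finitely many of the strictly increasing $i_n$ can lie in the finite downset of $m$. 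Discreteness of $S$ is then obtained by applying the same finiteness argument to $S \setminus \{k_n\}$ for each $n$.

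The main obstacle is the finiteness step for $\phi_j^{-1}(S)$ when $j$ happens to be incomparable with most of the $i_n$: here all three hypotheses have to be used in concert, with injectivity of the $\phi_{ij}$ used to transport representatives unambiguously through a common upper bound, $T_1$-ness used to close off finite sets, and the finite-downset condition used to bound how many indices $i_n$ can coexist below that upper bound. Once $S$ is known to be a closed, discrete, countable subset of the compact $K$, compactness forces $S$ to be finite, contradicting the construction and completing the proof.
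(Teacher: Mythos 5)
The paper never proves this proposition; it is stated with a citation to Theorem~3.1 of the reference \cite{Mitch2.5} and to the appendix of \cite{Mil}, so what you are really reconstructing is Milnor's classical argument for sequential direct limits. Your skeleton is the right one for that setting: the construction of the chain $i_1 < i_2 < \cdots$ and the distinct points $k_n$ is correct, as is the reduction of closedness of $S$ to closedness of $\phi_j^{-1}(S)$ in each $X_j$, and the endgame (an infinite closed discrete subspace of a compact space cannot exist). The gap is precisely the step you flag as the main obstacle: the finiteness of $\phi_j^{-1}(S)$. Your first observation gives only that $k_n \in \phi_j[X_j]$ forces $j \not\leq i_{n-1}$; for a chain this finishes the proof, because any fixed index eventually lies below $i_{n-1}$, but for a general directed set it says nothing when $j$ is incomparable to every $i_n$. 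The proposed repair via a common upper bound $m \geq i_n, j$ is circular: $m$ depends on $n$, and the finite-downset condition at $m$ only bounds how many $i_k$ satisfy $i_k \leq m$ --- since $m$ was chosen above $i_n$, this yields no bound that is uniform in $n$. Concretely, let $I = \{i_1 < i_2 < \cdots\} \cup \{j\} \cup \{m_1 < m_2 < \cdots\}$ with $j$ incomparable to every $i_n$ and with $m_n$ an upper bound of $\{i_1,\ldots,i_n,j,m_1,\ldots,m_{n-1}\}$; this is a directed poset with finite downsets. Take $X_{i_n} = \{1,\ldots,n\}$ and $X_j = X_{m_n} = {\mathbb N}$, all discrete, with the evident inclusions. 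Then $k_n = n$ realises every requirement of your construction, yet $\phi_j^{-1}(S) = {\mathbb N}$ is infinite. So the finiteness claim is simply false as a consequence of the construction.

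The failure is not reparable, because the proposition in this generality is false. Let $\mathcal U$ be a non-principal ultrafilter on $\mathbb N$, let $I$ be the set of nonempty finite subsets of $\mathcal U$ ordered by inclusion (directed, with finite downsets), and for $F \in I$ let $X_F = \{0\} \cup \{1/n \ :\ n \notin \bigcap F\}$ with the subspace topology from $\mathbb R$ (hence $T_1$), the transition maps being inclusions. Since $\bigcap {\mathcal U} = \emptyset$, the underlying set of the colimit is $K = \{0\} \cup \{1/n \ :\ n \in {\mathbb N}\}$. The colimit topology coincides with the usual compact topology on $K$: if $C \subseteq K$ is not closed in the usual sense then $0 \notin C$ and $C' = \{n : 1/n \in C\}$ is infinite; as a non-principal ultrafilter admits no infinite pseudo-intersection, there is $B \in {\mathcal U}$ with $C' \setminus B$ infinite, so $C \cap X_{\{B\}}$ is an infinite subset of $X_{\{B\}} \setminus \{0\}$ accumulating at $0 \in X_{\{B\}}$ and is therefore not closed there. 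Hence $K$ is a compact subset of the direct limit, but every $\phi_F[X_F]$ omits the infinitely many points $1/n$ with $n \in \bigcap F$, so $K$ is contained in no $\phi_F[X_F]$. What survives, and what your argument genuinely proves, is the case of a countable index set: there one passes to a cofinal chain (cofinal subsystems give the same colimit), and along a chain your finiteness step is valid because every index is eventually dominated by the $i_n$. Any honest proof of the proposition must either restrict to that case or impose further hypotheses (for instance that the transition maps are closed embeddings).
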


The second technical proposition follows from commutativity of the analytic $K$-theory groups, a result again proved in \cite{Mitch2.5}.

\begin{proposition} \label{tp2}
The topological monoid $BGL_\infty (Sk ({\mathcal A}))$ is homotopy-commutative.
\noproof
\end{proposition}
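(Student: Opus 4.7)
The plan is to reduce this to the rotation argument from \cite{Mitch2.5}, which establishes homotopy-commutativity of $BGL_\infty$ for $C^\ast$-categories, and then to verify that the argument uses only the Banach-space structure on each $\Hom$-set and not the involution or the $C^\ast$-identity.

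First I would reformulate the goal concretely: given $a, b \in \Sk(\mathcal{A}_\oplus)$ and invertibles $u \in GL(a)$, $v \in GL(b)$, the task is to produce a path in $GL_\infty(\Sk(\mathcal{A}))$ from $u \oplus v$ to $v \oplus u$, depending continuously on $(u,v)$. I would stabilise by identities and work inside $\Hom(c,c)$ where $c = (a \oplus b) \oplus (b \oplus a)$. Using the canonical swap $s = \sigma_{a,b} \colon a \oplus b \to b \oplus a$ from the biproduct data, I would form the off-diagonal endomorphism
$$\tau = \begin{pmatrix} 0 & s \\ -s^{-1} & 0 \end{pmatrix} \in \Hom(c,c), \qquad \tau^2 = -e_c.$$

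Second, I would exploit the Banach-space structure on $\Hom(c,c)$ to define the rotation
$$R(\theta) = \cos(\theta)\, e_c + \sin(\theta)\, \tau, \qquad \theta \in [0, \pi/2],$$
which has inverse $\cos(\theta)\, e_c - \sin(\theta)\, \tau$ and so lies in $GL(c)$ for every $\theta$. A direct matrix computation shows that conjugation by $R(\pi/2) = \tau$ interchanges $(u \oplus v) \oplus e_{b \oplus a}$ and $e_{a \oplus b} \oplus (v \oplus u)$, so the path $\theta \mapsto R(\theta) \cdot [(u \oplus v) \oplus e_{b \oplus a}] \cdot R(\theta)^{-1}$ connects them in $GL(c)$, continuously in $(u,v)$. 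Thirdly, I would assemble these paths across the directed system defining $GL_\infty(\Sk(\mathcal{A}))$ and apply $B(-)$ to obtain the desired homotopy between the block-sum multiplication on $BGL_\infty(\Sk(\mathcal{A}))$ and its opposite.

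The main obstacle is the bookkeeping in step one: while formally $\tau$ looks like a $2 \times 2$ rotation matrix, it is really a formal sum of compositions of biproduct injections and projections, and one must track source-target matchings through the additive-category structure to confirm both that $\tau \in \Hom(c,c)$ is well-defined and that $\tau^2 = -e_c$. (Note that homotopy-commutativity does not follow formally from abelianness of the homotopy groups, as $S^3$ shows, so a concrete construction is essential.) This calculation is exactly the one recorded in \cite{Mitch2.5}, and it transfers to the Banach-category setting unchanged because it is purely algebraic together with use of continuous $\mathbb{R}$-linear combinations within each $\Hom$-space.
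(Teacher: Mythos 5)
Your rotation-trick argument is correct and is essentially the proof the paper implicitly relies on: the paper offers no proof of this proposition beyond deferring to \cite{Mitch2.5}, where homotopy-commutativity of $BGL_\infty$ for $C^\ast$-categories is obtained by exactly this Whitehead-lemma conjugation path, and your observation that the construction uses only the Banach-space structure on the $\Hom$-sets (continuous $\mathbb{R}$-linear combinations such as $\cos(\theta)e_c+\sin(\theta)\tau$), and not the involution or $C^\ast$-identity, is precisely why it transfers to Banach categories. Your parenthetical remark is also well taken: the paper's stated justification, that the proposition ``follows from commutativity of the analytic $K$-theory groups,'' is not a formal implication (as $S^3$ shows), so a concrete construction of the commuting homotopy such as yours is genuinely what is required.
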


Let $\sigma$ be a permutation of the set $\{ 1,\ldots , k \}$.  It follows from the above results that any map $X\rightarrow BGL(a_1 \oplus \cdots \oplus a_k )$ is homotopic to a map $X\rightarrow BGL(\sigma (a_1)\oplus \cdots \oplus \sigma (a_k))$.  With this observation, we can proceed to the proof of our lemma.

\begin{lemma}
The homomorphism $\alpha \colon M\rightarrow BGL_\infty (\Sk ({\mathcal A})) \times G$ induces a ring isomorphim
$$\alpha_\star \colon H_\star (M)[\pi^{-1}] \rightarrow H_\star (BGL_\infty (\Sk ({\mathcal A}) \times G)$$
\end{lemma}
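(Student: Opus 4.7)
The plan is to exploit a natural grading. The Pontrjagin ring $H_\star (M)$ is $\pi$-graded by path components, with graded piece $H_\star (BGL(a))$ for $a\in \pi$, and the Pontrjagin product $H_\star (BGL(a)) \otimes H_\star (BGL(b)) \to H_\star (BGL(a\oplus b))$ is induced by the block-sum operation $x\oplus y=Bj_a (x) + Bj_b(y)$. Since $G$ is discrete, the target splits as $H_\star (BGL_\infty (\Sk({\mathcal A}))) \otimes {\mathbb Z}[G]$ and carries a $G$-grading from the second factor. The map $\alpha_\star$ sends the $a$-piece of $H_\star (M)$ into the $[a]$-piece of the target, so by the universal property of localization it extends uniquely to a $G$-graded map $\alpha_\star \colon H_\star (M)[\pi^{-1}]\to H_\star (BGL_\infty (\Sk({\mathcal A}))\times G)$; it then suffices to verify that this extension is a ring homomorphism and bijective on each $G$-graded piece.

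First I would verify that $\alpha_\star$ is a ring homomorphism. For $x\in BGL(a)$ and $y\in BGL(b)$, the elements $Bj_a (x)$ and $Bj_b (y)$ of $GL(a\oplus b)$ are matrices supported in complementary blocks, so their product in $GL(a\oplus b)$ is again the block-diagonal matrix with $x$ and $y$ on the diagonal. Passing to $BGL_\infty (\Sk({\mathcal A}))$, this block-diagonal matrix represents $i(x)\cdot i(y)$ provided the blocks happen to sit in the order fixed by the standard enumeration of $\Ob (\Sk({\mathcal A}))$; Proposition \ref{tp2} handles the general case, telling us that permuting the block ordering acts as the identity at the level of homology.

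Next I would check bijectivity on each $G$-graded piece, fixing $g\in G$. For surjectivity, given $z\otimes g$, Proposition \ref{tp1} applied to a compact singular cycle representing $z$ gives an object $a_0 \in \pi$ and a class $x_0 \in H_\star (BGL(a_0))$ with $i_\star (x_0)=z$. Writing $g=[c]-[d]$ with $c,d\in \pi$, set $a=a_0 \oplus c$ and $b=a_0 \oplus d$, and let $x\in H_\star (BGL(a))$ be the image of $x_0$ under $(Bj_{a_0})_\star$. Then $[a]-[b]=g$ in $G$, still $i_\star (x)=z$, and $x/b$ maps to $z\otimes g$. For injectivity, by $G$-gradedness it suffices to take $w=x/b$ in a single $g$-piece with $\alpha_\star (w)=0$; then $i_\star (x)=0$ in $H_\star (BGL_\infty (\Sk({\mathcal A})))$, and since singular homology commutes with directed colimits there exists $a''$ with $(Bj_a )_\star (x)=0$ in $H_\star (BGL(a\oplus a''))$. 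Pontrjagin-multiplying the numerator of $x/b$ by the unit class in $H_0 (BGL(a''))$ therefore annihilates it in the localization.

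The main obstacle is the ring-homomorphism step: the bijectivity analysis is essentially a direct-limit argument combined with the universal property of localization, but identifying the block-sum Pontrjagin product on $M$ with the Pontrjagin product on $BGL_\infty (\Sk({\mathcal A}))$ crucially depends on Proposition \ref{tp2} to move blocks into the standard ordering, and any subtlety in the proof lives there.
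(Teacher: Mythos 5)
Your argument is correct and takes essentially the same route as the paper: surjectivity comes from Proposition \ref{tp1} applied to a compact singular cycle, Proposition \ref{tp2} is used to rearrange blocks, and the Grothendieck-group bookkeeping ($g=[c]-[d]$, stabilising by $\oplus c$ and $\oplus d$) matches the paper's $g=a\oplus b\oplus c^{-1}$ step. You actually supply more detail than the paper does, which dismisses injectivity with ``can be proven similarly'' and leaves the ring-homomorphism check implicit.
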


\begin{proof}
Let $[\sigma ] \in H_k (BGL_\infty (\Sk ({\mathcal A}))\times G)$ be a singular homology class, generated by a continuous map
$$\sigma \colon \Delta^k \rightarrow BGL_\infty (\Sk ({\mathcal A}))\times G$$

By proposition \ref{tp1}, we know that $\im \sigma \subseteq BGL (a)\times \{ g \}$ for some object $a\in \Ob (\Sk ({\mathcal A})_\oplus )$ and element $g\in G$.  By proposition \ref{tp2} , we can assume that $a\in \Ob (\Sk ({\mathcal A}_\oplus ))$.  Let us write $g=a\oplus b\oplus c^{-1}$ where $b,c\in \pi$.  Then certainly
$$\im \sigma \subseteq BGL (a\oplus b) \times \{ a\oplus b\oplus c^{-1} \}$$

Hence, by definition of the map $\alpha$, the homology class $[\sigma ]$ lies in the image of the homomorphism
$$\alpha_\star \colon H_\star (m)[\pi^{-1}] \rightarrow H_\star (BGL_\infty (\Sk ({\mathcal A}))\times G)$$

Thus the homomorphism $\alpha_\star$ is surjective.  Injectivity can be proven similarly.
\end{proof}

\section{Assembly}

The assembly map in algebraic $K$-theory originally appeared in \cite{Lod}.  A version of this map, adapted to give a homotopy-theoretic description of the analytic assembly map, was used in \cite{Ti1}.

In this section we will describe the $K$-theory assembly map in terms of the tools developed here, and give a new proof that the homotopy-theoretic description of the analytic assembly map agrees with the usual definition.

\begin{definition}
Let $\pi$ be a discrete groupoid, and let $R$ be a topological ring.  Then we define $R\pi$ to be the category with objects $\Ob (R\pi ) = \Ob (\pi )$ and morphism sets
$$\Hom (a,b)_{R \pi} := \{ x_1 g_1 + \cdots + x_n g_n \ |\ x_i \in R, g_i \in \Hom (a,b)_\pi \}$$

Composition of morphisms is defined by the formula
$$\left( \sum_{i=1}^m x_i g_i \right) \left( \sum_{j=1}^n y_j h_j \right) = \sum_{i,j=1}^{m,n} (x_i y_j ) g_i h_j$$

The topology on the space $\Hom (a,b)_{R \pi}$ is defined by viewing it as a direct limit of a family of subspaces homeomorphic to the space $R^n$ for some $n$.\end{definition}

It is easy to check that the category $R\pi$ is a topological $R$-moduloid.

\begin{proposition}
Let $R$ be a topological ring.  Then the category $R\pi$ is isomorphic to the tensor product ${\mathbb Z}\pi \otimes_{\mathbb Z} R$.
\end{proposition}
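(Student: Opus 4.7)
The plan is to write down an explicit isomorphism $\alpha \colon {\mathbb Z}\pi \otimes_{\mathbb Z} R \rightarrow R\pi$ of topological $R$-moduloids and verify the three kinds of compatibility in turn: bijection on morphism sets, composition, and topology.

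First I would view $R$ as a ${\mathbb Z}$-moduloid with a single object $\star$ and endomorphism ring $R$. Under the tensor product construction of the previous section, the moduloid ${\mathbb Z}\pi \otimes_{\mathbb Z} R$ then has object set $\{ a\otimes \star \mid a\in \Ob (\pi )\}$, which I identify with $\Ob (\pi )=\Ob (R\pi )$ by the assignment $a\otimes \star \mapsto a$. On each morphism set I would define $\alpha$ as the bilinear extension of
$$\alpha (g\otimes r) = rg \qquad g\in \Hom (a,b)_\pi ,\ r\in R.$$

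Next I would check that this is a bijection on each morphism set. Since $\Hom (a,b)_{{\mathbb Z}\pi}={\mathbb Z}[\Hom (a,b)_\pi ]$ is the free abelian group on the set $\Hom (a,b)_\pi$, the standard identification ${\mathbb Z}[S]\otimes_{\mathbb Z} R \cong \bigoplus_{s\in S} R$ for any set $S$ produces an $R$-module isomorphism with $\{ \sum x_i g_i \mid x_i\in R,\ g_i \in \Hom (a,b)_\pi \}$, which is exactly $\Hom (a,b)_{R\pi}$. Compatibility with composition is a direct calculation from the formula $(x\otimes r)(y\otimes s)=xy\otimes rs$ established in the previous proposition, which on generators $g,h\in \pi$ gives $\alpha ((g\otimes r)(h\otimes s))=(rs)(gh)=(rg)(sh)=\alpha (g\otimes r)\alpha (h\otimes s)$, matching the composition law in $R\pi$.

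The only step that requires more than a symbolic manipulation is comparing the two topologies, and this is where I would expect the main subtlety to lie. In $R\pi$, the morphism space is presented as a direct limit of subspaces homeomorphic to $R^n$, one for each finite subset $\{ g_1 ,\ldots ,g_n \} \subseteq \Hom (a,b)_\pi$. On the other hand, $\Hom (a,b)_{{\mathbb Z}\pi}$ carries the discrete topology (it is a direct sum of copies of ${\mathbb Z}$ indexed by the discrete set $\Hom (a,b)_\pi$), so the topological tensor product with $R$ is just the direct sum $\bigoplus_{g\in \Hom (a,b)_\pi} R$ with the corresponding direct limit topology. Under $\alpha$, the finite stage indexed by $\{ g_1 ,\ldots ,g_n \}$ on each side is a copy of $R^n$ and the bijection is the identity in coordinates, so $\alpha$ and its inverse are continuous at each finite stage and hence on the whole direct limit. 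This finishes the verification that $\alpha$ is an isomorphism of topological $R$-moduloids.
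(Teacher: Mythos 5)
Your proposal is correct and takes essentially the same approach as the paper: the paper simply writes down the inverse map $\theta \colon R\pi \rightarrow {\mathbb Z}\pi \otimes_{\mathbb Z} R$, $\theta (x_1 g_1 + \cdots + x_n g_n) = g_1 \otimes x_1 + \cdots + g_n \otimes x_n$, and asserts it is an isomorphism. Your verification of the composition law and of the direct-limit topologies is more detailed than what appears in the paper, but it is the same underlying bijection.
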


\begin{proof}
We can define an isomorphism $\theta \colon R\pi \rightarrow \pi {\mathbb Z}\otimes_{\mathbb Z} R$ by writing $\theta (A) =A$ for each object $A$, and
$$\theta (x_1 g_1 + \cdots +x_n g_n ) = g_1\otimes x_1 + \cdots + g_n \otimes x_n$$
for all elements $x_i \in R$ and $g_i \in \Hom (A,B)_\pi$.
\end{proof}

Now, we have an inclusion
$$\Sigma | w{\mathbb Z}\pi_\oplus | \hookrightarrow |wN_\bullet {\mathbb Z}\pi_\oplus |$$
defined by looking at the $1$-skeleton, and therefore, taking adjoints, a natural map
$$| w{\mathbb Z}\pi_\oplus | \rightarrow \Omega |wN_\bullet {\mathbb Z}\pi_\oplus |$$

Composing this map with the inclusion $B\pi \hookrightarrow \Sigma |w{\mathbb z}\pi_\oplus |$, we have a natural map
$$B\pi_+ \rightarrow \Omega |wN_\bullet {\mathbb Z}\pi_\oplus |$$

Here we write $X_+$ to denote a topological space with an added disjoint basepoint.  If we do this, we can deem the above map to be basepoint-preserving.  We can compose the above map with the exterior product to obtain a natural map
$$B\pi_+\wedge {\mathbb K}(R) \rightarrow {\mathbb K}({\mathbb Z}\pi ) \wedge {\mathbb K}(R) \rightarrow {\mathbb K}(R\pi )$$

\begin{definition}
The above map is called the {\em $K$-theory assembly map} associated to the groupoid $\pi$ and topological ring $R$.
\end{definition}

In order to compare definitions of assembly maps, we need an abstract characterisation.  One suitable characterisation can be found in \cite{WW}.

\begin{definition}
Let $F$ be a covariant functor from the category of spaces to the stable category of symmetric spectra.  Then we call the functor $F$

\begin{itemize}

\item {\em homotopy-invariant} if it preserves homotopy-equivalences.

\item {\em excisive} if it preserves coproducts up to homotopy equivalence.

\end{itemize}

\end{definition}

Let $+$ denote the one point topological space.

\begin{theorem}
Let $F$ be a homotopy-invariant functor from the category of spaces to the stable category of symmetric spectra.  Then there is a homotopy-invariant excisive functor $F^\%$ and a natural transformation $\alpha \colon F^\% \rightarrow F$ such that the map $\alpha \colon F^\% (+) \rightarrow F(+)$ induces an isomorphism at the level of stable homotopy groups.

Further, the functor $F^\%$ and natural transformation $\alpha$ are unique up to stable equivalence.
\noproof
\end{theorem}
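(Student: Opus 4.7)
The plan is to produce $F^\%$ explicitly as a ``smash with the value on a point'' construction and then verify the listed properties in sequence.  Concretely, I would set
$$F^\% (X) := X_+ \wedge F(+)$$
where $X_+$ denotes $X$ with a disjoint basepoint adjoined; equivalently, $F^\%(X)$ admits a presentation as the homotopy colimit, indexed over the category of singular simplices of $X$, of the constant diagram with value $F(+)$.  The two descriptions agree up to stable equivalence, and whichever presentation is more convenient can be used at a given step.

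Next I would dispose of the structural properties.  Homotopy-invariance of $F^\%$ is immediate, since $X\mapsto X_+$ preserves homotopy equivalences and smashing with a fixed spectrum preserves them further.  Excision reduces to the canonical identification $(X\amalg Y)_+ \simeq X_+ \vee Y_+$ together with the distributivity of smash over wedges of spectra.  The natural transformation $\alpha$ is then built pointwise: each $x\in X$ gives a map $x\colon +\rightarrow X$, hence a map $F(+)\rightarrow F(X)$, and parametrising this assignment over $X$ yields a natural based map $\alpha_X\colon X_+\wedge F(+)\rightarrow F(X)$.  Evaluated at $+$ this reduces to the identity on $F(+)$, which is trivially a stable equivalence.

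The main obstacle is uniqueness.  Suppose $G$ is another homotopy-invariant excisive functor equipped with a natural transformation $\beta\colon G\rightarrow F$ which is a stable equivalence at $+$.  My plan is to construct a natural stable equivalence $F^\%\rightarrow G$ compatible with $\alpha$ and $\beta$.  First, since both functors are homotopy-invariant, I can restrict attention to CW complexes via CW approximation.  Excisivity together with homotopy-invariance forces both $G$ and $F^\%$ to convert the pushout squares attaching cells into homotopy pushout squares of symmetric spectra, so both send each skeletal inclusion $X^{(n-1)} \hookrightarrow X^{(n)}$ to a cofibration sequence of spectra.  An induction on $n$, starting from the canonical identification $G(+)\simeq F(+) \simeq F^\% (+)$ and invoking the five-lemma stage by stage, produces compatible stable equivalences $F^\%(X^{(n)})\rightarrow G(X^{(n)})$.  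Passage to a general CW complex uses once more that excisive, homotopy-invariant functors convert sequential cell attachments into filtered homotopy colimits, so the inductively constructed equivalences assemble into the desired $F^\%(X)\rightarrow G(X)$; the naturality and compatibility with $\alpha$ and $\beta$ are visible from the construction.
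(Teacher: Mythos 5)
This theorem is not proved in the paper at all: it is quoted from Weiss--Williams \cite{WW} and stated with the proof omitted, so your attempt has to be measured against the standard argument there. Your choice of model $F^\% (X) = X_+ \wedge F(+)$ and the cell-induction strategy for uniqueness are indeed the right skeleton, but two steps have genuine gaps. First, your construction of $\alpha$ --- ``each $x\in X$ gives $F(+)\rightarrow F(X)$, and parametrising over $X$ yields $X_+\wedge F(+)\rightarrow F(X)$'' --- tacitly assumes that $F$ is a \emph{continuous} (topologically enriched) functor, so that $x\mapsto F(x)_\star$ is continuous in $x$. Nothing in the hypotheses provides this; $F$ is only a functor. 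This is exactly the difficulty the Weiss--Williams construction is designed to circumvent: one sets $F^\%(X)=\hocolim_{\sigma} F(\Delta^{|\sigma|})$ over the simplex category of $X$, obtains the map to $F(X)$ from bare functoriality via the universal property of the homotopy colimit, and only then identifies $F^\%(X)$ with $X_+\wedge F(+)$ through the zigzag $\hocolim_\sigma F(+) \leftarrow \hocolim_\sigma F(\Delta^{|\sigma|})$, whose left-pointing arrow is an equivalence by homotopy invariance. Your ``constant diagram'' presentation does not repair this, since the constant diagram with value $F(+)$ admits no natural map to $F(X)$ without passing through that intermediate diagram.

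Second, the uniqueness argument is under-supported in two ways. You assert that excisivity plus homotopy invariance forces $G$ to carry cell-attachment pushouts to homotopy pushouts of spectra; with the definition of ``excisive'' as stated in the paper (preservation of coproducts only) this is simply false, and what you actually need is the stronger Weiss--Williams condition that the functor sends homotopy cocartesian squares to homotopy cocartesian squares and commutes with filtered colimits up to equivalence. More seriously, even granting that, the skeleton-by-skeleton five-lemma comparison produces, for each fixed $X$ \emph{with a chosen CW structure}, some stable equivalence $F^\%(X)\rightarrow G(X)$; it does not produce a \emph{natural transformation}, because continuous maps do not respect CW structures, and it does not by itself guarantee compatibility with $\alpha$ and $\beta$ over $F$. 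The uniqueness clause is really a universal property of $F^\%$ in a homotopy category of functors, and the naturality is the hard part of the proof in \cite{WW}, not something ``visible from the construction.'' To make your route work you would need to phrase the induction as a comparison of functors (e.g.\ via the simplicial resolution above) rather than of individual values.
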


We call the natural transformation $\alpha \colon F^\% \rightarrow F$ the {\em assembly map} associated to the functor $F$.

\begin{theorem} \label{clasKt}
Let $X$ be a topological space, and let $\pi (X)$ be the fundamental groupoid of $X$, equipped with the discrete topologicy.  Write
$$F(X) = {\mathbb K}(R\pi (X)) \qquad F^\% (X) = X_+ \wedge {\mathbb K}(R)$$

Then there is a natural transformation $\alpha \colon F^\% \rightarrow F$
such that the map
$$\alpha \colon F^\% (B\pi ) \rightarrow F( \pi )$$
is homotopy-equivalent to the $K$-theory assembly map for a discrete groupoid $\pi$.

Further, suppose we have a natural transformation $\alpha' \colon F^\% \rightarrow F$ such that the map $\alpha' \colon F^\% (+) \rightarrow F(+)$ is a weak homotopy-equivalence.  Then the natural transformation $\alpha'$ is weakly homotopic to the map $\alpha$.
\end{theorem}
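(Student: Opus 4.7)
The plan is to invoke the Weiss--Williams characterisation theorem stated earlier in this section, first to confirm that $F^\%$ as defined really is a valid source for the assembly map to $F$, and then to get the uniqueness clause for free.

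First I would verify the hypotheses on both functors. The functor $F(X) = \mathbb{K}(R\pi(X))$ is homotopy-invariant because $\pi(-)$ takes homotopy equivalences of spaces to equivalences of groupoids, and the assignment $\pi \mapsto \mathbb{K}(R\pi)$ carries equivalences of groupoids to stable equivalences of spectra. The functor $F^\%(X) = X_+ \wedge \mathbb{K}(R)$ is clearly homotopy-invariant, and excisive because $(X \amalg Y)_+ \cong X_+ \vee Y_+$ and smashing with a fixed spectrum preserves wedge sums. At a point, $\pi(+)$ is the trivial groupoid, so $R\pi(+)$ reduces to $R$ regarded as a moduloid with one object, and hence $F(+) \simeq \mathbb{K}(R) = F^\%(+)$.

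Next I would construct $\alpha \colon F^\% \to F$ directly, by replaying the construction preceding this theorem with $\pi$ replaced throughout by $\pi(X)$. Using a functorial model for the classifying space of the fundamental groupoid, the natural map $X \to B\pi(X)$ composes with the inclusion $B\pi(X)_+ \hookrightarrow \Sigma |w\mathbb{Z}\pi(X)_\oplus|$ and, upon taking adjoints, yields a natural spectrum map $\Sigma^\infty X_+ \to \mathbb{K}(\mathbb{Z}\pi(X))$. Smashing with $\mathbb{K}(R)$ and post-composing with the exterior product then gives
$$\alpha \colon X_+ \wedge \mathbb{K}(R) \to \mathbb{K}(\mathbb{Z}\pi(X)) \wedge \mathbb{K}(R) \to \mathbb{K}(R\pi(X)).$$
Evaluated at $X = +$, the construction collapses to the identity on $\mathbb{K}(R)$, so $\alpha(+)$ is a weak equivalence. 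Evaluated at $X = B\pi$ for a discrete groupoid $\pi$, the canonical map $B\pi \to B\pi(B\pi)$ is a weak equivalence (inducing the equivalence $\mathbb{K}(R\pi(B\pi)) \simeq \mathbb{K}(R\pi) = F(\pi)$), and under this identification the construction reproduces verbatim the $K$-theory assembly map defined above.

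Finally, the uniqueness statement is immediate from the Weiss--Williams theorem: both $\alpha$ and any $\alpha'$ as in the hypothesis are natural transformations from the homotopy-invariant excisive functor $F^\%$ to $F$ inducing isomorphisms on stable homotopy groups at the one-point space, so by the uniqueness clause of that theorem they must be weakly homotopic. The main obstacle is the middle step, specifically securing strict naturality of $X \to B\pi(X)$ so that $\alpha$ is an honest natural transformation of functors rather than merely natural up to homotopy; this is handled by the standard device of choosing functorial simplicial models for the fundamental groupoid and its classifying space, after which everything else follows by chasing the definitions in the preceding construction.
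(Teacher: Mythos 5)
Your overall strategy matches the paper's: construct $\alpha$ by extending the assembly-map construction from classifying spaces to arbitrary $X$ via a comparison $X \to B\pi (X)$, verify that $F$ is homotopy-invariant and that $F^\%$ is homotopy-invariant and excisive, and read off the uniqueness clause from the Weiss--Williams characterisation. The uniqueness half and the verification of the hypotheses are fine. The gap is in the construction of $\alpha$, precisely at the point you flag as ``the main obstacle'' and then defer to a ``standard device.'' The problem is not one of strict versus homotopy naturality of an existing map: when $\pi (X)$ carries the \emph{discrete} topology, as the theorem stipulates, the vertex set of $B\pi (X)$ is a discrete subspace of the realisation, so the assignment $x\mapsto 1_x$ is not a continuous map $X\rightarrow B\pi (X)$ at all (for path-connected non-trivial $X$ it would have to be constant). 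No choice of functorial simplicial model for the discrete groupoid $\pi (X)$ repairs this, because the obstruction is topological rather than a coherence issue, and so the middle arrow of your composite is missing.

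The paper's resolution is to introduce a second model $\pi^\tau (X)$, the fundamental groupoid topologised as a quotient of the Moore path space with the compact-open topology. With this topology the natural map $X\rightarrow B\pi^\tau (X)$, $x\mapsto 1_x$, \emph{is} continuous, and by the theorem of May in the appendix to Rosenberg--Weinberger the comparison $f\colon B\pi (X)\rightarrow B\pi^\tau (X)$ induced by the identity functor is a homotopy equivalence. One then defines $\beta \colon F^\% (X)\rightarrow F^\% (B\pi (X))$ as the composite
$$F^\% (X)\rightarrow F^\% (B\pi^\tau (X)) \stackrel{f^\star}{\rightarrow} F^\% (B\pi (X))$$
using a formal inverse $f^\star$ in the stable category, and sets $\alpha$ equal to $\beta$ followed by the $K$-theory assembly map; note that the resulting $\alpha$ is only a natural map in the stable category, which is all the Weiss--Williams theorem requires. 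To salvage your version you would need either this device or an explicit natural zig-zag (for instance through the realisation of the singular simplicial set) together with the observation that formally inverting the backwards weak equivalence still yields a natural transformation satisfying the hypotheses of the characterisation theorem; as written, the key map in your construction does not exist.
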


\begin{proof}
Let $\pi^\tau (X)$ denote the fundamental groupoid of the space $X$, equipped with the topologiy defined by considering the groupoid $\pi^\tau (X)$ to be a quotient of the space of Moore paths
$$\{ \gamma \colon [0,s]\rightarrow X \ | \ s\geq 0 \}$$
equipped with the compact open topology.  Then the identity map defines a continuous functor $\pi (X)\rightarrow \pi^\tau (X)$.  It is proved in the appendix by J.P.~May to \cite{RW} that the induced map of classifying spaces
$$f\colon B\pi (X) \rightarrow B\pi^\tau (X)$$
is a homotopy-equivalence.  Hence the induced map of spectra $f_\star \colon F^\% (B\pi (X)) \rightarrow F^\% (B\pi^\tau (X))$ has a formal inverse $f^\star$ in the category of symmetric spectra.

There is a natural inclusion $X\rightarrow B\pi^\tau (X)$ defined by mapping a point $x\in X$ to the point corresponding to the identity $1_x$.  At the level of spectra we obtain a natural map $\beta$ by forming the composition
$$F^\% (X) \rightarrow F^\% (B\pi^\tau (X)) \stackrel{f^\star}{\rightarrow} F^\% (B\pi (X))$$

Composing the map $\beta$ with the $K$-theory assembly map, we obtain a natural map $\alpha \colon F^\%(X) \rightarrow F (X)$, which is a homotopy-equivalence when the space $X$ is a single point.  Let $\pi$ be a discrete groupoid, and let $X=B\pi$.  Then the map $\alpha$ is homotopic to the $K$-theory assembly map.

Finally, note that it is easy to see that the functor $F$ is homotopy-invariant, and the functor $F^\%$ is homotopy-invariant and excisive.  Therefore the map $\alpha$ is an assembly map in the sense of the above theorem.  Hence, if we have a natural transformation $\alpha' \colon F^\% \rightarrow F$ such that the map $\alpha' \colon F^\% (+) \rightarrow F(+)$ is a weak homotopy-equivalence, then the natural transformation $\alpha'$ is weakly homotopic to the map $\alpha$.
\end{proof}

We will end this section by looking at the assembly map appearing in the analytic Novikov conjecture, which we shall refer to as the {\em analytic assembly map}.  To fit this map into our present framework, recall the following result from \cite{Mitch3}, which provides all we need to know about the analytic assembly map for the present purposes.

\begin{theorem} \label{mitch3}
Let $F^\%$ be a homotopy-invariant and excisive functor from the category of spaces to the stable category of symmetric spectra.  Let $\alpha \colon F^\% (X)\rightarrow {\mathbb K}C^\ast_\mathrm{max} \pi (X)$ be a natural map of spectra that is a stable equivalence when the space $X$ is a single point.  Let $G$ be a discrete group.  Then the map
$$\alpha \colon F^\% (BG )\rightarrow {\mathbb K}C^\ast_\mathrm{max} \pi (BG) \cong {\mathbb K} C^\ast_\mathrm{max} (G )$$
is a description of the analytic assembly map for the group $G$ at the level of connective spectra.
\noproof
\end{theorem}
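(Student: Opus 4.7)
The plan is to deduce this theorem directly from the abstract uniqueness-of-assembly-maps result stated just before theorem \ref{clasKt}. The first step I would carry out is to verify that $F(X) := \mathbb{K}C^\ast_{\max}\pi(X)$ is itself a homotopy-invariant functor of spaces: a homotopy equivalence $X \to Y$ induces an equivalence of discrete fundamental groupoids, which passes through the maximal $C^\ast$-completion construction to an equivalence of Banach categories, and hence through functoriality of $\mathbb{K}$ to a homotopy equivalence of spectra.

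With $F$ homotopy-invariant, the abstract characterisation theorem produces a universal assembly map $\alpha_0 \colon F_0^\% \to F$, unique up to stable equivalence among natural transformations out of homotopy-invariant excisive functors into $F$ that are stable equivalences on a point. Since the hypothesised $\alpha \colon F^\% \to F$ satisfies exactly those hypotheses, the uniqueness clause forces $\alpha$ to be weakly homotopic to $\alpha_0$ on every space, in particular on $BG$. Applying the same uniqueness clause to any standard construction of the analytic assembly map --- viewed as a natural transformation from the homotopy-invariant excisive functor $X \mapsto X_+ \wedge \mathbb{K}(\mathbb{C})$ into $\mathbb{K}C^\ast_{\max}\pi(X)$ that reduces to the identity on $\mathbb{K}(\mathbb{C})$ at a point --- shows that the analytic assembly map is also weakly homotopic to $\alpha_0$. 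Composing the two comparisons at $X = BG$ yields the claim.

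The hard part will be checking that the classical analytic assembly map really does fit into this framework in the required natural and excisive way, so that the uniqueness clause is legitimately applicable; the secondary difficulty is the homotopy-invariance of $\mathbb{K}C^\ast_{\max}\pi(-)$, which, while morally clear, rests on the continuous functoriality of the maximal completion combined with the homotopy invariance of $K$-theory for Banach categories from the preceding section, and each of these needs to be checked carefully against the specific definitions used in this paper.
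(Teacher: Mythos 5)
The paper offers no proof of this statement at all: it is imported verbatim from \cite{Mitch3} (note the \emph{recall the following result from} phrasing and the absence of any argument), so there is nothing internal to compare your proposal against. Judged on its own terms, your proposal has the right formal skeleton --- the Weiss--Williams characterisation does reduce everything to exhibiting \emph{some} natural transformation out of a homotopy-invariant excisive functor into ${\mathbb K}C^\ast_\mathrm{max}\pi(-)$ that is an equivalence at a point and that is known to agree with the classical analytic assembly map --- but the step you defer as ``the hard part'' is not a checking exercise; it is the entire content of the theorem. The classical analytic assembly map is defined on $K$-homology groups via Kasparov's $KK$-theory and the index construction, not as a natural transformation of spectrum-valued functors on spaces landing in the $K$-theory of the Banach category $C^\ast_\mathrm{max}\pi(X)$. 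To invoke uniqueness you must first (i) identify the connective spectrum $X_+\wedge {\mathbb K}({\mathbb C})$ with a model for (connective) analytic $K$-homology, and (ii) show that the $KK$-theoretic index map is induced by a map of spectra natural in $X$ with the required point-value behaviour. Both steps are carried out in \cite{Mitch3} using $KK$-theory of $C^\ast$-categories and Paschke-type duality, and neither is recoverable from the formal uniqueness statement alone. As written, your argument establishes only that all transformations satisfying the hypotheses agree with one another, which is vacuous until one of them is tied to the classical map.

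Two smaller points. First, the homotopy-invariance of $X\mapsto {\mathbb K}C^\ast_\mathrm{max}\pi(X)$ requires that ${\mathbb K}$ of a Banach category be invariant under equivalences (not just isomorphisms) of the underlying groupoid; this follows from cofinality as in Theorem \ref{cofinality}, but it is a step, not a formality. Second, the qualifier ``at the level of connective spectra'' in the statement is doing real work: the classical assembly map lives on periodic topological $K$-homology, and the comparison with the connective functor $X_+\wedge{\mathbb K}({\mathbb C})$ is part of what must be established, not assumed.
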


Here, for a discrete groupoid $\pi$, the category $C^\ast \pi$ is a certain Banach category obtained by completing the topological ringoid ${\mathbb C}\pi$; see \cite{Mitch2} for further details.  There is a canonical inclusion ${\mathbb C}\pi \hookrightarrow C^\ast \pi$, and so a natural map ${\mathbb K} {\mathbb C}\pi \rightarrow {\mathbb K} C^\ast_\mathrm{max} \pi$.

\begin{theorem}
Let $G$ be a discrete group.  The composition of the $K$-theory assembly map and the above map
$$BG _+ \wedge {\mathbb K}({\mathbb C}) \rightarrow {\mathbb K} {\mathbb C}G \rightarrow {\mathbb K} C^\ast_\mathrm{max} G$$
is a description of the analytic assembly map for the group $G$ at the level of connective spectra.
\end{theorem}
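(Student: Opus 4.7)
The plan is to fit the stated composition into the abstract framework of theorem \ref{mitch3}. Define the functors $F(X) = {\mathbb K}C^\ast_\mathrm{max}\pi(X)$ and $F^\%(X) = X_+ \wedge {\mathbb K}({\mathbb C})$ from topological spaces to symmetric spectra, and construct a natural transformation $\alpha \colon F^\% \rightarrow F$ by composing the $K$-theory assembly map of theorem \ref{clasKt} (applied with $R = {\mathbb C}$) with the spectrum map ${\mathbb K}{\mathbb C}\pi(X) \rightarrow {\mathbb K}C^\ast_\mathrm{max}\pi(X)$ induced by the canonical inclusion ${\mathbb C}\pi(X) \hookrightarrow C^\ast_\mathrm{max}\pi(X)$.

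First I would verify the hypotheses needed to invoke theorem \ref{mitch3}. Homotopy-invariance of $F^\%$ is immediate from the fact that smashing with a fixed spectrum preserves homotopy equivalences, and excisiveness follows because smashing distributes over coproducts of pointed spaces up to the canonical stable equivalence. Next I would check that $\alpha$ is a stable equivalence on a point: when $X = +$, the fundamental groupoid $\pi(+)$ is trivial, so ${\mathbb C}\pi(+)$ and $C^\ast_\mathrm{max}\pi(+)$ are both naturally identified with ${\mathbb C}$, while $F^\%(+) = S^0 \wedge {\mathbb K}({\mathbb C}) \simeq {\mathbb K}({\mathbb C})$; the assembly map and the completion map are both the identity, up to natural equivalence, on this base case.

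With these hypotheses in hand, theorem \ref{mitch3} applies to the composite $\alpha$ and identifies the map
$$\alpha \colon F^\%(BG) \rightarrow {\mathbb K}C^\ast_\mathrm{max}\pi(BG) \cong {\mathbb K}C^\ast_\mathrm{max}(G)$$
with the analytic assembly map for the discrete group $G$ at the level of connective spectra. It remains to match this with the composite displayed in the statement. By theorem \ref{clasKt}, applied to the ring $R = {\mathbb C}$, the first leg of $\alpha$ at $X = BG$ is homotopy equivalent to the $K$-theory assembly map $BG_+ \wedge {\mathbb K}({\mathbb C}) \rightarrow {\mathbb K}({\mathbb C}G)$; naturality of the completion-induced map means the second leg is precisely ${\mathbb K}({\mathbb C}G) \rightarrow {\mathbb K}C^\ast_\mathrm{max}(G)$. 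The two composites therefore agree up to weak homotopy, giving the claim.

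The main obstacle is essentially bookkeeping rather than substance: verifying that the natural equivalence produced by theorem \ref{clasKt} is compatible with the completion map so that the full composite really is weakly homotopic to the displayed one. Since theorem \ref{clasKt} already provides the compatibility up to weak equivalence and the completion map is itself a genuine natural transformation of functors on ringoids, composing with it preserves that compatibility, so no genuinely new homotopical input is required beyond the two characterisation theorems cited.
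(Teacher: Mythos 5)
Your proposal is correct and follows essentially the same route as the paper: both form the composite natural transformation $X_+\wedge {\mathbb K}({\mathbb C})\rightarrow {\mathbb K}{\mathbb C}\pi (X)\rightarrow {\mathbb K}C^\ast_\mathrm{max}\pi (X)$, check that the source functor is homotopy-invariant and excisive, observe the map is an equivalence on a point, and then invoke theorem \ref{mitch3}. Your write-up is somewhat more explicit than the paper's about verifying the hypotheses and about matching the abstract $\alpha$ at $X=BG$ with the displayed composite via theorem \ref{clasKt}, but no new idea is involved.
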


\begin{proof}
In the proof of theorem \ref{clasKt}, we have already commented that the functor $X\mapsto X_+ \wedge {\mathbb K}({\mathbb C})$ is excisive.  We have also seen that the $K$-theory assembly map is natural.  It follows that the composition
$$X_+ \wedge {\mathbb K}({\mathbb C}) \rightarrow {\mathbb K} {\mathbb C}\pi (X) \rightarrow {\mathbb K} C^\ast_\mathrm{max} \pi (X)$$
is natural.  This map is the identity map, and so is certainly a homotopy-equivalence, when the space $X$ is a single point.  The desired result now follows from theorem \ref{mitch3}.
\end{proof}

A similar result holds when working with the real numbers instead of the complex numbers; the proof is identical.

\section{The Equivariant Case}

We can use equivariant analogues of the techniques applied in the previous section to look at factorisations of the Baum-Connes assembly map introduced in \cite{BCH}.  Our factorisation is constructed with a number of techniques first introduced in \cite{DL}.

\begin{definition}
Let $G$ be a discrete group.  Then we define the {\em orbit category} of $G$ to be the category in which the objects are $G$-spaces of the form $G/H$, where $H$ is a subgroup of $G$, and morphisms are equivariant maps.
\end{definition}

Given a category $\mathcal C$, we call a contravariant functor from $\mathcal C$ to the category of pointed topological spaces a {\em right pointed $\mathcal C$-space}, and a covariant functor from $\mathcal C$ to the category of symmetric spectra a {\em left $\mathcal C$-spectrum}.

For example, given a pointed $G$-space $X$, there is an associated right pointed $\Or (G)$-space $\map_G (-,X)$ defined by mapping the object $G/H$ to the space of equivariant basepoint-preserving maps $\map_G (G/H , X)$.

\begin{definition}
Let $X$ be a right $\mathcal C$-space, and let $F$ be a left $\mathcal C$-spectrum.  Let $X_+$ be the right $\mathcal C$-spectrum with spaces $X_+(A) =X(A)_+$ for each object $A\in \Ob ({\mathcal C})$.  Then we define the {\em product} $X_+\wedge_{\mathcal C} F$ to be the spectrum with spaces
$$(X_+(A)\wedge_{\mathcal C}F)_n = \amalg_{A\in \Ob ({\mathcal C})} X_+(A) \wedge F(A)_n / \sim$$
where $\sim$ is the equivalence relation
$$x\wedge \phi y = x\phi \wedge y$$
for all elements $x\in X_+(B)$, $y\in F(A)_n$, and $\phi \in \Hom (A,B)_{\mathcal C}$,
\end{definition}

\begin{definition}
Let $F$ be a covariant functor from the category of right $G$-spaces to the stable category of symmetric spectra.  Then we call the functor $F$ {\em $G$-homotopy-invariant} if it preserves $G$-homotopy-equivalences.
\end{definition}

The following result comes from \cite{DL}.

\begin{theorem} \label{DLA}
Let $F$ be a functor from the category of proper $G$-$CW$-complexes to the category of spectra.  Then we can define a $G$-homotopy-invariant and excisive functor $F^\%$ by writing
$$F^\% (X) = \map_G (-,X_+)\wedge_{\Or (G)} F$$

Further, there is a canonical functor $\alpha \colon F^\% \rightarrow F$ which is an equivalence whenever the space $X$ takes the form $G/H$, where $H$ is a finite subgroup of $G$.  The $G$-homotopy-invariant functor $F^\%$ with this property and functor $\alpha$ are unique up to stable equivalence.
\noproof
\end{theorem}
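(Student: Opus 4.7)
The plan is to construct $F^\%$ as indicated, verify its formal properties, define $\alpha$, compute it on orbits, and then deduce uniqueness by cellular induction. First I would note the identification $\map_G(G/H, X_+) \cong (X^H)_+$, which exhibits $\map_G(-, X_+)$ as a right pointed $\Or(G)$-space. Since a $G$-homotopy equivalence of proper $G$-$CW$-complexes restricts to ordinary homotopy equivalences on every fixed-point set $X^H$, the assignment $X \mapsto \map_G(-, X_+)$ preserves $G$-homotopy equivalences objectwise on $\Or(G)$. Because the product $\wedge_{\Or(G)}$ is functorial in each argument, $F^\%$ inherits $G$-homotopy-invariance. Excisiveness is similar: the product over $\Or(G)$ is a weighted colimit in its right argument and hence commutes with coproducts, while $\map_G(-, (X \amalg Y)_+)$ splits as $\map_G(-, X_+) \vee \map_G(-, Y_+)$ thanks to the identity $(X \amalg Y)^H = X^H \amalg Y^H$.

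Next I would construct $\alpha$ tautologically: applying $F$ to each equivariant map $G/H \to X$ yields a map $F(G/H) \to F(X)$, so one has natural maps $\map_G(G/H, X)_+ \wedge F(G/H) \to F(X)$ which glue over $\Or(G)$ into a map of coends $\alpha \colon F^\%(X) \to F(X)$. The crucial verification is that $\alpha$ is an equivalence at $X = G/H$ with $H$ finite, which is a coend version of the Yoneda lemma: the representable right $\Or(G)$-space $\map_G(-, G/H)_+$ assembles with $F$ over $\Or(G)$ to recover $F(G/H)$ up to canonical equivalence, since the orbit types occurring in $G/H$ all have finite isotropy, and the resulting evaluation is precisely $\alpha$.

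For uniqueness, suppose $\tilde F$ is another $G$-homotopy-invariant excisive functor equipped with a natural transformation $\tilde \alpha \colon \tilde F \to F$ that is an equivalence on every orbit $G/H$ with $H$ finite. Any proper $G$-$CW$-complex $X$ is built from such orbits by pushouts along equivariant cofibrations and filtered colimits. Excisiveness converts these into homotopy pushouts, so by induction over the equivariant skeleta combined with filtered-colimit comparison, a Mayer--Vietoris style argument promotes $\tilde\alpha$ to a stable equivalence on $X$, matching $\alpha$ up to stable equivalence and pinning down $F^\%$.

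The main obstacle will be the point-set bookkeeping for the coend $\wedge_{\Or(G)}$: in order for the cellular induction in the uniqueness step to genuinely go through, one needs $F^\%$ to send equivariant cellular cofibrations in $X$ to levelwise cofibrations of symmetric spectra and to commute with the relevant filtered colimits. Equivalently one must know that the chosen model of the coend is homotopically well-behaved with respect to the cofibrantly generated model structure on symmetric spectra. Once this foundational check is in hand, the rest reduces to the orbit-case Yoneda computation and a formal excision induction.
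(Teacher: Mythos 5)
This theorem is imported verbatim from Davis--L\"uck \cite{DL} and the paper marks it \noproof, so there is no in-text argument to compare against; your sketch is, in substance, the standard proof from that source and is essentially sound. Two small points are worth tightening. First, the identification $\map_G(-,G/H)_+\wedge_{\Or(G)}F\simeq F(G/H)$ is the co-Yoneda lemma and holds for \emph{every} orbit $G/H$, finite isotropy or not; the finiteness of $H$ in the statement is not what makes that computation work, but rather reflects that only orbits with finite isotropy occur as cells of a \emph{proper} $G$-$CW$-complex, so those are the only orbits on which $\alpha$ needs to be (and is asserted to be) an equivalence for the uniqueness argument. Second, your cellular induction uses excision in the strong sense --- that the functor carries equivariant cellular pushouts to homotopy pushouts and commutes with the filtered colimit over skeleta --- whereas the paper's stated definition of ``excisive'' only demands preservation of coproducts; you are right to use the stronger property (it is what \cite{DL} actually establishes for $\map_G(-,X_+)\wedge_{\Or(G)}F$, via the cofibrancy/CW-replacement bookkeeping you flag at the end), but you should say explicitly that this is the property being invoked, since preservation of coproducts alone does not support the Mayer--Vietoris step.
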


The functor $\alpha$ in the above theorem is called the {\em equivariant assembly map} associated to the functor $F$.

\begin{definition}
Let $\mathcal C$ be a category.  Then a {\em topological $\mathcal C$-ring} is a functor from the category $\mathcal C$ to the category of topological rings.
\end{definition}

Given a topological $\mathcal C$-ring $R$, we can associate a left $\mathcal C$-spectrum ${\mathbb K}(R)$ by writing ${\mathbb K}(R)(A) = {\mathbb K}(R(A))$ for each object $A\in \Ob ({\mathcal C} )$.

\begin{definition}
Let $G$ be a discrete group, and let $X$ be a topological space equipped with a right $G$-action.  Then we define the {\em transport groupoid}, $\overline{X}$, to be the discrete groupoid in which the objects are the points of the space $X$, and the morphism sets are defined by writing
$$\Hom (x,y)_{\overline{X}} := \{ g\in G \ |\ xg=y \}$$
\end{definition}

\begin{definition}
Let $\pi$ be a discrete groupoid, and let $R$ be a topological $\pi$-ring.  Then we define $R\pi$ to be the category with objects $\Ob (R\pi ) = \Ob (\pi )$ and morphism sets
$$\Hom (a,b)_{R\pi} := \{ x_1 g_1 + \cdots + x_n g_n \ |\ x_i \in R_b, g_i \in \Hom (a,b)_\pi \}$$

Composition of morphisms is defined by the formula
$$\left( \sum_{i=1}^m x_i g_i \right) \left( \sum_{j=1}^n y_j h_j \right) = \sum_{i,j=1}^{m,n} x_i g_i(y_j ) g_i h_j$$

The topology on the space $\Hom (a,b)_{R\pi}$ is defined by viewing it as a direct limit of a family of subspaces homeomorphic to the space $R^n$ for some $n$.
\end{definition}

Let $R$ be a topological $G$-ring.  Then there is a canonical faithful functor $i\colon \overline{X} \rightarrow G$.  It follows that $R$ can also be considered a topological $\overline{X}$-ring.  The following result is easy to check.

\begin{proposition}
The functor defined by sending a proper $G$-$CW$-complex $X$ to the spectrum ${\mathbb K}(R\overline{X})$ is $G$-homotopy-invariant.
\noproof
\end{proposition}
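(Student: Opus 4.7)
The plan is to prove $G$-homotopy invariance by combining a mapping-cylinder reduction with a cell-by-cell induction, then invoking the cofinality theorem to handle a single equivariant cell. Throughout, let $f \colon X \to Y$ be a $G$-homotopy equivalence of proper $G$-$CW$-complexes; the goal is to show that $f_\star \colon {\mathbb K}(R\overline{X}) \to {\mathbb K}(R\overline{Y})$ is a weak homotopy equivalence of spectra.

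First I would replace $Y$ by the $G$-equivariant mapping cylinder $M_f$, thereby reducing to the case where $f$ is the inclusion of a $G$-$CW$-subcomplex $X \hookrightarrow M_f$ admitting a $G$-equivariant deformation retraction onto $X$. It then suffices to show by induction on the relative $G$-skeleta of $M_f$ over $X$ that attaching a single equivariant cell $G/H \times D^n$, with $H$ finite by properness, along its boundary does not change ${\mathbb K}(R\overline{\,-\,})$ up to weak equivalence, provided the new cell is part of a $G$-deformation retract.

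For the inductive step I would combine the fibration theorem (theorem \ref{fibration}) with the quotient/excision sequence from Section 4. Applied to the ideal in $R\overline{M_f^{(n)}}$ generated by the morphisms passing through the interior of the new cell, these tools identify the change in $K$-theory with ${\mathbb K}$ of a relative moduloid built from $R\overline{G/H \times D^n}$ and $R\overline{G/H \times S^{n-1}}$. Using that $\overline{G/H}$ is equivalent as a groupoid to $BH$, the cofinality theorem (theorem \ref{cofinality}) then identifies ${\mathbb K}(R\overline{G/H})$ with ${\mathbb K}(RH)$, reducing the claim to the statement that attaching a contractible disc $D^n$ (relative to $S^{n-1}$) is trivial at the level of the single-orbit groupoid.

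The main obstacle I anticipate is the discrepancy in size between $\overline{G/H \times D^n}$ and $\overline{G/H}$: as a discrete groupoid the former has one connected component for every point of $D^n$, so for their $K$-theories to agree the topology of $D^n$ must enter through the moduloid structure in an essential way. The delicate step is to show that a choice of continuous biproducts parametrised by $D^n$ makes the inclusion of a basepoint component strictly cofinal, so that the cofinality theorem applies in this parametrised setting. This is the crux; once it is established, the rest of the induction is essentially formal, and $G$-homotopy invariance follows by passing through the mapping cylinder as above.
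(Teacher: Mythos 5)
The paper supplies no argument for this proposition (it is stated with ``the following result is easy to check''), so your proposal has to stand on its own, and it does not: the step you yourself isolate as ``the crux'' is never established, and with the definitions actually in force it cannot be established in the form you propose. The ringoid $R\overline{X}$ is built from the \emph{discrete} groupoid $\overline{X}$: its object set carries the discrete topology and its morphism spaces are topologised using only the topology of $R$, so the topology of $X$ never enters the moduloid structure at all. In $\overline{G/H\times D^n}$ one has $\Hom\bigl((xH,d),(yH,d')\bigr)=\emptyset$ whenever $d\neq d'$, because $G$ acts only on the orbit factor of an equivariant cell; hence the corresponding hom-group in $R\overline{G/H\times D^n}$ is zero. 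Consequently, for an object $b=(xH,d)$ with $d\neq d_0$ and \emph{any} object $a$ of $\bigl(R\overline{G/H\times\{d_0\}}\bigr)_\oplus$, a putative isomorphism from $b\oplus a$ to an object supported over $d_0$ would have an identically zero column, forcing $1_b=0$, which is absurd. So the inclusion of the basepoint component is \emph{not} strictly cofinal and theorem \ref{cofinality} does not apply; there is no ``parametrised'' refinement available either, since nothing in the framework records a continuous dependence of objects or biproducts on $D^n$.

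The obstruction you flagged is therefore not a technical wrinkle but a genuine failure of your reduction: already at the level of $\pi_0$, the spectrum ${\mathbb K}\bigl(R\overline{G/H\times D^n}\bigr)$ involves the free abelian group on the (uncountable) set of components of the transport groupoid, whereas ${\mathbb K}\bigl(R\overline{G/H}\bigr)\simeq{\mathbb K}(RH)$ does not, so attaching a cell is detectably nontrivial for the functor $X\mapsto{\mathbb K}(R\overline{X})$ computed cellwise in your fashion. Your mapping-cylinder and skeletal-induction scaffolding is formally reasonable, but it funnels the entire content of the proposition into this one unproved (and, as you have formulated it, false) cofinality claim. Any correct treatment must instead extract invariance from the $G$-homotopy itself --- for instance by producing a natural isomorphism between the functors $\overline{f_0},\overline{f_1}\colon\overline{X}\to\overline{Y}$ induced by $G$-homotopic maps, or by working with the homotopy-theoretic construction $\map_G(-,X_+)\wedge_{\Or (G)}(-)$ of theorem \ref{DLA}, where invariance is automatic --- rather than by comparing the transport groupoids of a cell and its boundary through the cofinality theorem.
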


Let ${\mathbb K}^G_\mathrm{hom}(-;R)$ be the excisive and $G$-homotopy-invariant functor defined by writing
$${\mathbb K}^G_\mathrm{hom} (X;R) = \map_G (-,X_+)\wedge_{\Or (G)} (Y\mapsto {\mathbb K}(R\overline{Y}))$$

Then by theorem \ref{DLA} there is an equivariant assembly map
$$\alpha \colon {\mathbb K}_\mathrm{hom}^G(X;R) \rightarrow {\mathbb K}(R\overline{X})$$

When $R = {\mathbb Z}$, according to \cite{DL}, the above assembly map is the map appearing in the Farrell-Jones conjecture of \cite{FJ}.

To fit the Baum-Connes assembly map into our present framework, we recall the main result of \cite{Mitch6}, which provides all we need to know about the Baum-Connes assembly map for the present purposes.

\begin{theorem} \label{mitch6}
Let $A$ be a $G$-$C^\ast$-algebra.  Let $F^\%$ be a $G$-homotopy-invariant and excisive functor from the category of right $G$-spaces to the stable category of symmetric spectra.  Let $\alpha \colon F^\% (X)\rightarrow {\mathbb K}(A\rtimes_r \overline{X})$ be a natural map of spectra that is a stable equivalence when the space $X$ has the form $G/H$, where $H$ is a finite subgroup of the group $G$.

Let $i\colon \overline{X}\rightarrow G$ be the canonical faithful functor.  Then up to stable equivalence the composite map $\alpha i_\star \colon F^\% (X)\rightarrow {\mathbb K}(A\rtimes_r G)$ is the Baum-Connes assembly map.
\noproof
\end{theorem}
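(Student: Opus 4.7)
The plan is to deduce the statement from the abstract characterisation of equivariant assembly maps provided by theorem \ref{DLA}. Consider the functor $\Phi$ from proper $G$-$CW$-complexes to the stable category of symmetric spectra defined by $\Phi(Y) := {\mathbb K}(A\rtimes_r \overline{Y})$. First I would verify that $\Phi$ is $G$-homotopy-invariant; this proceeds exactly as for the algebraic analogue $Y\mapsto {\mathbb K}(R\overline{Y})$ stated as a proposition earlier, since a $G$-equivariant homotopy induces a natural equivalence of transport groupoids which ${\mathbb K}$ sends to a homotopy equivalence of spectra.

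With $G$-homotopy-invariance of $\Phi$ established, theorem \ref{DLA} produces a canonical $G$-homotopy-invariant and excisive functor $\Phi^\%(X) := \map_G (-,X_+)\wedge_{\Or (G)}\Phi$, together with a natural transformation $\beta \colon \Phi^\%(X)\rightarrow \Phi(X) = {\mathbb K}(A\rtimes_r \overline{X})$ that is a stable equivalence whenever $X$ has the form $G/H$ with $H$ finite. The uniqueness clause of the same theorem then applies: the pair $(F^\%, \alpha )$ of the hypothesis satisfies precisely these conditions, so there is a zig-zag of stable equivalences between $(F^\%, \alpha )$ and $(\Phi^\%, \beta )$. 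Consequently $\alpha i_\star$ is stably equivalent to $\beta i_\star$ as natural transformations, and it remains to identify $\beta i_\star$ with the Baum-Connes assembly map.

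The principal obstacle is precisely this last identification, which requires translating the $KK$-theoretic definition of the Baum-Connes map from \cite{BCH} into the Davis-Lück framework. I would follow the strategy of \cite{DL}. For $X = G/H$ with $H$ finite, the transport groupoid $\overline{G/H}$ is equivalent to $H$, so $\Phi(G/H)\simeq {\mathbb K}(A\rtimes_r H)$ and $i_\star$ reduces to the standard induction map ${\mathbb K}(A\rtimes_r H)\rightarrow {\mathbb K}(A\rtimes_r G)$. Expressing $\Phi^\%(\underline{EG})$ as a homotopy colimit over the orbit category restricted to finite subgroups, the composite $\beta i_\star$ at $\underline{EG}$ is then visibly the Davis-Lück homotopy-theoretic model of the Baum-Connes assembly map; matching this model with the usual $KK$-theoretic formulation is the hard step. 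Because both descriptions are determined by their behaviour on the generating orbits $G/H$ with $H$ finite, and because $\Phi^\%$ is excisive, it suffices to check the identification on such generators, after which naturality and the uniqueness statement of theorem \ref{DLA} propagate the agreement to all proper $G$-$CW$-complexes and to the originally given pair $(F^\%, \alpha )$.
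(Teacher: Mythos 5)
The paper does not actually prove this theorem: it is stated with a box and explicitly imported as the main result of \cite{Mitch6}, so there is no internal argument to compare yours against. Your opening reduction is nevertheless sound and is surely how any proof would begin: the uniqueness clause of theorem \ref{DLA} identifies your hypothesised pair $(F^\%,\alpha)$ with the canonical Davis--L{\"u}ck pair $(\Phi^\%,\beta)$ attached to $\Phi(Y)={\mathbb K}(A\rtimes_r\overline{Y})$, so the problem reduces to showing that $\beta$ followed by $i_\star$ is the Baum--Connes assembly map.

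At exactly that point, however, your argument stops being a proof. The identification of $\beta i_\star$ with the $KK$-theoretic assembly map of \cite{BCH} is not a formal consequence of checking on orbits $G/H$ with $H$ finite: the classical Baum--Connes map is defined on $\colim_X KK^G_\ast (C_0(X),A)$ by a descent/index construction, and is not given in advance as a natural transformation of spectrum-valued excisive functors on proper $G$-$CW$-complexes. The uniqueness statement of theorem \ref{DLA} therefore cannot be applied to it until one has (i) produced a natural, excisive, spectrum-level model whose homotopy groups compute the equivariant $K$-homology source of the classical map, and (ii) shown that the classical map is induced by a natural transformation out of that model. That is precisely the content of \cite{Mitch6}, carried out there via equivariant $KK$-theory of $C^\ast$-categories, and your proposal names it as ``the hard step'' and then asserts rather than performs it. The sentence claiming that ``both descriptions are determined by their behaviour on the generating orbits'' and that it therefore suffices to check on generators is where the gap sits: that principle is only available once both sides are known to be (stably equivalent to) excisive natural transformations in $X$, which is the very thing to be established for the $KK$-theoretic side. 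As it stands, the proposal is an accurate reduction followed by a restatement of the theorem, not a proof of it.
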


Here, for a discrete groupoid $\pi$, a $\pi$-$C^\ast$-algebra is defined similarly to a topological $\pi$-ring, but has some extra structure.  The topological ringoid $A\rtimes_r \pi$ is a certain Banach category obtained by completing the topological ringoid $A\pi$; see \cite{Mitch6} for further details.  There is a canonical inclusion $A\pi \hookrightarrow A\rtimes_r\pi$.

\begin{theorem}
Let $G$ be a discrete group, and let $A$ be a $G$-$C^\ast$-algebra.  Then the composition
$${\mathbb K}_\mathrm{hom}^G(X;A) \rightarrow {\mathbb K} (A\overline{X}) \rightarrow {\mathbb K} (AG)\rightarrow {\mathbb K}(A\rtimes_r G)$$
is a description of the Baum-Connes assembly map for the group $G$ at the level of connective spectra.
\end{theorem}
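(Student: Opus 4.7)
The plan is to reduce the statement to theorem \ref{mitch6} in the same spirit as the non-equivariant case reduced to theorem \ref{mitch3}. More precisely, I will construct a natural map $\alpha \colon {\mathbb K}_\mathrm{hom}^G(X;A) \to {\mathbb K}(A\rtimes_r \overline{X})$ of $G$-homotopy-invariant functors, verify that $\alpha$ is a stable equivalence whenever $X$ has the form $G/H$ with $H \leq G$ finite, and observe that the composite displayed in the statement agrees with $i_\star \circ \alpha$ (where $i\colon \overline{X}\to G$ is the canonical faithful functor).

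First I would define $\alpha$ as the composition of the equivariant assembly map provided by theorem \ref{DLA} applied to the functor $Y \mapsto {\mathbb K}(A\overline{Y})$ (known to be $G$-homotopy-invariant by the proposition preceding theorem \ref{mitch6}), followed by the map induced by the natural inclusion of topological ringoids $A\overline{X}\hookrightarrow A\rtimes_r \overline{X}$. Naturality in $X$ is immediate from naturality of both constituents. The commutative square
$$\begin{array}{ccc}
{\mathbb K}(A\overline{X}) & \to & {\mathbb K}(AG) \\
\downarrow & & \downarrow \\
{\mathbb K}(A\rtimes_r \overline{X}) & \to & {\mathbb K}(A\rtimes_r G) \\
\end{array}$$
induced by functoriality of the $K$-theory spectrum shows that the four-term composite in the statement of the theorem coincides, up to canonical homotopy, with $i_\star \circ \alpha$; the two factorisations differ only in the order in which one applies $i_\star$ and the completion.

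The main obstacle will be verifying that $\alpha$ is a stable equivalence when $X = G/H$ for $H$ a finite subgroup of $G$. By theorem \ref{DLA} the DL-assembly half of $\alpha$ is already an equivalence there, so the problem reduces to showing that ${\mathbb K}(A\overline{G/H}) \to {\mathbb K}(A\rtimes_r \overline{G/H})$ is a stable equivalence. The transport groupoid $\overline{G/H}$ is equivalent to the one-object groupoid determined by the finite group $H$. For finite $H$, the algebraic moduloid $AH$ already carries the Banach-category structure of $A\rtimes_r H$ because no analytic completion occurs over a finite group; the inclusion $AH\hookrightarrow A\rtimes_r H$ is thus an isomorphism of (non-unital) moduloids. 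Applying the cofinality theorem \ref{cofinality} to compare these one-object moduloids with the full transport groupoids $\overline{G/H}$ (which are equivalent to skeleta isomorphic to $H$) then yields that the completion map induces a homotopy equivalence of $K$-theory spectra.

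Once $\alpha$ is in hand and confirmed to be a stable equivalence on all $G/H$ with $H$ finite, theorem \ref{mitch6} identifies $i_\star \circ \alpha$ with the Baum-Connes assembly map for $G$ at the level of connective spectra. Combined with the commuting square above, this gives precisely the asserted description of the Baum-Connes assembly map as the displayed four-term composite.
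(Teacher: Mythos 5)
Your proposal follows essentially the same route as the paper: reorder the four-term composite via the commutative square so that it factors through ${\mathbb K}(A\rtimes_r \overline{X})$, observe that $A\overline{X}$ and $A\rtimes_r\overline{X}$ coincide when $X=G/H$ with $H$ finite (so the natural map is a stable equivalence there), and invoke theorem \ref{mitch6}. The only difference is that you justify the coincidence at $G/H$ by passing to a skeleton and citing cofinality, whereas the paper simply asserts the equality of the two categories directly; this is an elaboration, not a change of approach.
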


\begin{proof}
We have a commutative diagram
$$\begin{array}{ccc}
{\mathbb K} (A\overline{X}) & \rightarrow & {\mathbb K} (AG) \\
\downarrow & & \downarrow \\
{\mathbb K}(A\rtimes_r \overline{X}) & \rightarrow & {\mathbb K}(A\rtimes_r G) \\
\end{array}$$
so our map can also be viewed as the composition
$${\mathbb K}_\mathrm{hom}^G(X;A) \rightarrow {\mathbb K} (A\overline{X}) \rightarrow {\mathbb K} (A\rtimes_r \overline{X})\rightarrow {\mathbb K}(A\rtimes_r G)$$

The above composition is natural.  When $X=G/H$, with $H$ a finite group, the categories $A\overline{X}$ and $A\rtimes_r \overline{X}$ are equal.  Thus the map
${\mathbb K}_\mathrm{hom}^G(X;A) \rightarrow {\mathbb K} (A\rtimes_r \overline{X})$ is a stable equivalence in this case, and the result follows from theorem \ref{mitch6}.
\end{proof}

\bibliographystyle{plain}

\end{document}